\documentclass[reqno]{amsart}

\usepackage[utf8]{inputenc}
\usepackage[OT2,T1]{fontenc}
\DeclareSymbolFont{cyrletters}{OT2}{wncyr}{m}{n}
\DeclareMathSymbol{\Sha}{\mathalpha}{cyrletters}{"58}
\usepackage{graphicx}
\graphicspath{ {./images/} }

\usepackage[hyphens,spaces,obeyspaces]{url}
\usepackage[colorlinks,allcolors=blue,hyperindex,breaklinks]{hyperref}
\hypersetup{
           breaklinks=true,   % splits links across lines
           colorlinks=true,   % displays links as colored text instead of blocks
        }

\usepackage{orcidlink}

\usepackage{pgfplots}
\pgfplotsset{compat=1.18}
\usepackage{mathrsfs}

\title[]{Covers of Bruhat--Tits trees}
\date{\today}
\usepackage[foot]{amsaddr}
\usepackage{comment}

\author[Corina-Gabriela Ciobotaru]{Corina-Gabriela Ciobotaru \orcidlink{0000-0002-1596-1061}}
\address{Department of Mathematics, Aarhus University, Ny Munkegade 118, Building 1530, 423, DK-8000
Aarhus C, Denmark}
\email{cociobotaru@math.au.dk}

\author[Peter Vang Uttenthal]{Peter Vang Uttenthal \orcidlink{0009-0001-0878-8213}}
\address{Department of Mathematics, Aarhus University, Ny Munkegade 118, Building 1530, 421, DK-8000
Aarhus C, Denmark}
\email{petervang@math.au.dk}

\newcommand{\GL}{\operatorname{GL}}

\newcommand{\Aut}{\operatorname{Aut}}

\newcommand{\Z}{\mathbb{Z}}

\newcommand{\e}{\stackrel{e}{\sim}}

\usepackage{amsthm,amsmath, mathrsfs, mathtools}
\usepackage{amsfonts}
\usepackage{amssymb}
\usepackage{fancyhdr}
\usepackage{IEEEtrantools}
\usepackage{tikz-cd}
\usepackage[english]{babel}
\usepackage[utf8]{inputenc}
\usepackage{csquotes}

\newtheorem{theorem}{Theorem}
\numberwithin{theorem}{section}
\newtheorem{lemma}{Lemma}
\numberwithin{lemma}{section}
\newtheorem{definition}{Definition}
\numberwithin{definition}{section}
\newtheorem{proposition}{Proposition}
\numberwithin{proposition}{section}
\newtheorem{corollary}{Corollary}
\numberwithin{corollary}{section}
\newtheorem{remark}{Remark}
\numberwithin{remark}{section}

\numberwithin{conjecture}{section}

\numberwithin{question}{section}

\numberwithin{example}{section}
\newtheorem{assumption}{Assumption}
\usepackage[hyphens,spaces,obeyspaces]{url}
\usepackage[colorlinks,allcolors=blue,hyperindex,breaklinks]{hyperref}

\hypersetup{
           breaklinks=true,   % splits links across lines
           colorlinks=true,   % displays links as colored text instead of blocks
        }
\usepackage{tabularx}
\usepackage{subcaption}
\begin{document}
\begin{abstract}
Let $G$ be a locally compact group and let $\widetilde G$ be a central extension of $G$ that splits over a maximal compact subgroup $K$ of $G$. We derive an explicit cocycle that lifts the natural action of $G$ on the homogeneous space $G/K$ to an action of $\widetilde G$. As an application, for a non-Archimedean local field $F$, we construct a connected locally finite tree on which the metaplectic covers of $\mathrm{GL}_2(F)$ act by automorphisms, 
providing a geometric 
analog of the Bruhat--Tits tree of $\mathrm{GL}_2(F)$. 
Furthermore, under suitable transitivity assumptions, we prove that $(\widetilde G,\widetilde K)$ is a Gelfand pair. Finally, we describe the associated parabolic and contraction subgroups with respect to $\widetilde G$  from the perspective of the geometry of the constructed tree.
\end{abstract}

\maketitle

\tableofcontents

\section{Introduction}

The geometry of Bruhat--Tits buildings plays a central role in the study of reductive groups over non-Archimedean local fields. In the rank-one case, the Bruhat--Tits tree of $\mathrm{GL}_2(F)$ provides a geometric model that encodes algebraic and dynamical properties of the group which, in turn, are useful for classifying its unitary representations. 
For metaplectic groups and more general central extensions, however, it is less clear what the corresponding geometric object should be.
Yet, 
covers of Bruhat--Tits buildings
have shown to be applicable in number theory. For instance, the need for passing to a double cover of a Bruhat--Tits tree arises 
in the work of 
A. Ash and D. Doud \cite{ash-doud} on attaching even Galois representations
induced from totally real quadratic fields 
to Hecke eigenfunctions in the cohomology of arithmetic groups. 

The goal of the present work is  
to construct geometric models naturally associated with covering groups, 
which requires bringing techniques from the
rather different research backgrounds of the two authors together.

Our starting point is a locally compact group $G$, a maximal compact subgroup $K$ of $G$, and a central extension
\[
1 \longrightarrow T \longrightarrow \widetilde G \longrightarrow G \longrightarrow 1
\]
determined by a cohomology class $\alpha \in H^2(G,T)$ with values in an abelian locally compact topological group $T$. Given the homogeneous space $X=G/K$ and a cross-section $\theta:X\to G$, we form 
the standard cocycle
\begin{equation} \label{sigmaintro}
\sigma(g,x)=\theta(gx)^{-1}g\theta(x).
\end{equation}
The first result shows that when the extension $\widetilde G $ splits over $K$, 
the cocycle $\sigma$ admits a natural lift 
to a cocycle $\sigma^{(\alpha)}$ 
on $\widetilde{G} \times X$
that depends on $\alpha$ and $\theta$
via an explicit formula \eqref{j} given below.

\begin{theorem}\label{thm1intro} Let $G$ be a locally compact group 
acting on an abelian group $T$. Fix $\alpha \in H^2(G,T)$
and let $K$ be maximal compact subgroup of $G$
that splits over $\widetilde{G}$.
For $X \simeq G/K$, a section
$\theta: X \to G$, and $\sigma$ given in \eqref{sigmaintro}, the function
\begin{equation} \label{j}
\sigma^{(\alpha)}(\tilde{g}, x)
    := \alpha\!\left(g \theta(x),\, \sigma(g,x)^{-1}\right)
       \alpha\!\left(g, \theta(x)\right) t
       \quad\quad \tilde{g} = (g,t)
\end{equation}
on $\widetilde{G} \times X$ defines a cocycle 
that induces an action of $\widetilde{G}$ on 
$\widetilde{X} := X \rtimes_{\sigma^{(\alpha)}} T$.
\end{theorem}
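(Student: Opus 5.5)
The plan is to read $\sigma^{(\alpha)}$ as the ``$T$-component'' of a lifted version of $\sigma$ inside $\widetilde G$. The cocycle identity then follows from associativity in $\widetilde G$, and no brute-force manipulation of the $2$-cocycle identity is needed. Throughout I realize $\widetilde G$ as $G\times T$ with multiplication $(g,t)(h,s)=(gh,\alpha(g,h)ts)$. Since the extension is central, I take the action of $G$ on $T$ to be trivial; if it is not, the same argument goes through with the action inserted in the evident places.

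\textbf{Step 1: normalizing the cocycle.} The hypothesis that the extension splits over $K$ means there is a continuous homomorphic section $K\to\widetilde G$. After changing $\alpha$ by a coboundary (which replaces $\widetilde G$ by an isomorphic extension), I will arrange the following:
\begin{itemize}
\item $\alpha$ is normalized, i.e.\ $\alpha(1,g)=\alpha(g,1)=1$;
\item $\alpha|_{K\times K}\equiv 1$, so that the splitting is $s(k)=(k,1)$ and $s(k)^{-1}=(k^{-1},1)$.
\end{itemize}
The point needing care is that the formula for $\sigma^{(\alpha)}$ depends on the chosen representative. I would therefore state explicitly that the theorem is proved for this adapted representative, and check that changing it changes $\sigma^{(\alpha)}$ only by a coboundary in $x$. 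I expect this to be the main (though mild) obstacle.

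\textbf{Step 2: the key identity.} Put $\widetilde\theta(x)=(\theta(x),1)$ and recall $\sigma(g,x)=\theta(gx)^{-1}g\theta(x)\in K$, so that $g\theta(x)\sigma(g,x)^{-1}=\theta(gx)$. For $\tilde g=(g,t)$ and $k=\sigma(g,x)$, a direct computation gives
\[
\tilde g\,\widetilde\theta(x)\,s(k)^{-1}=\bigl(\theta(gx),\,\alpha(g\theta(x),k^{-1})\,\alpha(g,\theta(x))\,t\bigr).
\]
Since $\alpha$ is normalized, the right-hand side equals $\widetilde\theta(gx)\cdot(1,\sigma^{(\alpha)}(\tilde g,x))$. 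This yields the decomposition
\[
\tilde g\,\widetilde\theta(x)=\widetilde\theta(gx)\,\sigma^{(\alpha)}(\tilde g,x)\,s(\sigma(g,x)),
\]
in which the $T$-factor is central. The decomposition of an element of $\widetilde G$ in the form $\widetilde\theta(y)\,z\,s(k)$ is unique, because projecting to $G$ determines $y$ and $k$, and then $z$ is forced.

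\textbf{Step 3: cocycle identity and action.} Apply Step 2 twice:
\[
\tilde g\tilde h\,\widetilde\theta(x)=\tilde g\,\widetilde\theta(hx)\,\sigma^{(\alpha)}(\tilde h,x)\,s(\sigma(h,x))
=\widetilde\theta(ghx)\,\sigma^{(\alpha)}(\tilde g,hx)\,\sigma^{(\alpha)}(\tilde h,x)\,s(\sigma(g,hx))\,s(\sigma(h,x)).
\]
Two facts let us simplify the last expression:
\begin{itemize}
\item $s$ is a homomorphism on $K$;
\item $\sigma(g,hx)\sigma(h,x)=\sigma(gh,x)$, which is the standard cocycle property of $\sigma$ and is immediate from its definition.
\end{itemize}
Hence the product of the two $s$-factors is $s(\sigma(gh,x))$. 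Comparing with the decomposition of $(\tilde g\tilde h)\widetilde\theta(x)$ and using uniqueness, we obtain
\[
\sigma^{(\alpha)}(\tilde g\tilde h,x)=\sigma^{(\alpha)}(\tilde g,hx)\,\sigma^{(\alpha)}(\tilde h,x).
\]
Also $\sigma^{(\alpha)}(1,x)=1$ by normalization.

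Consequently, the formula $\tilde g\cdot(x,t)=(gx,\sigma^{(\alpha)}(\tilde g,x)\,t)$ defines an action of $\widetilde G$ on $\widetilde X=X\rtimes_{\sigma^{(\alpha)}}T$:
\begin{itemize}
\item the identity acts trivially;
\item compatibility with products is exactly the cocycle identity above.
\end{itemize}
Continuity, or measurability, follows from that of $\theta$ and $\alpha$.
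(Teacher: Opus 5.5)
Your proof is correct, and your Step~2 is the paper's Claim~1 in another form. The paper shows $\theta'(\tilde g x)^{-1}\tilde g\,\theta'(x)=\bigl(\sigma(g,x),\sigma^{(\alpha)}(\tilde g,x)\bigr)$ for $\theta'(x)=(\theta(x),1)$. Since $(1,z)(k,1)=(k,z)$, this is exactly your decomposition $\tilde g\,\widetilde\theta(x)=\widetilde\theta(gx)\,\sigma^{(\alpha)}(\tilde g,x)\,s(\sigma(g,x))$.

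The routes differ in how the identity is reached and in what is done with it:
\begin{itemize}
\item \textbf{Reaching the identity.} The paper left-multiplies by $\theta'(gx)^{-1}$. This introduces $\alpha(\theta(gx),\theta(gx)^{-1})$, which then takes two applications of the $2$-cocycle identity to remove. You right-multiply by $s(\sigma(g,x))^{-1}=(\sigma(g,x)^{-1},1)$, so the formula for $\sigma^{(\alpha)}$ drops out of a single product.
\item \textbf{Using it.} The paper verifies the cocycle identity for $\sigma'$ by a long direct manipulation of $\alpha$. You deduce it from associativity, centrality of $T$, multiplicativity of $k\mapsto(k,1)$ on $K$, and uniqueness of the decomposition $\widetilde\theta(y)\,z\,s(k)$. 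This is the telescoping of $\theta'(\cdot)^{-1}\tilde g\,\theta'(\cdot)$ that Claim~1 already makes available but the paper does not exploit.
\end{itemize}
Your route is shorter and easier to check. It also shows that $\alpha|_{K\times K}=1$ is used only to make $K\times\{1\}$ a subgroup of $\widetilde G$. Finally, it exhibits $\widetilde X$ as the $\widetilde G$-set $\widetilde G/s(K)$ via $(x,\xi)\mapsto\widetilde\theta(x)(1,\xi)s(K)$. The paper's computation stays entirely at the level of explicit cocycle identities and gives none of this structure.

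One correction to Step~1: drop the representative-independence check you single out as the main obstacle. It is not needed, since the paper, like you, fixes a representative with $\alpha|_{K\times K}=1$ and proves the identity for it. Moreover, as stated it is false in general.
\begin{itemize}
\item Two such representatives differ by $\delta f(g,h)=f(g)f(h)f(gh)^{-1}$, where $\chi:=f|_K$ is forced to be a character $K\to T$.
\item Transporting along the isomorphism $(g,t)\mapsto(g,f(g)t)$, a direct computation multiplies $\sigma^{(\alpha)}$ by $f(\theta(x))f(\theta(gx))^{-1}\,\chi(\sigma(g,x))^{-1}$.
\item The factor $f(\theta(x))f(\theta(gx))^{-1}$ is a coboundary in $x$.
\item The factor $\chi\circ\sigma$ is not a coboundary when $\chi\neq 1$. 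To see this, evaluate at $g\in K$ and the base point $x_*=K$ of $G/K$: a coboundary would force $\chi\equiv1$ on $\theta(x_*)^{-1}K\theta(x_*)=K$.
\end{itemize}
This matches the picture $\widetilde X\cong\widetilde G/s(K)$, which depends on the chosen splitting $s$ and not only on the class of $\alpha$. Your claim holds only when the splitting is kept fixed, i.e.\ $f|_K\equiv 1$.
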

\noindent The formula \eqref{j} 
follows purely from group cohomology.
As a tool for lifting locally compact group actions on homogeneous spaces, 
we illustrate the use of Theorem \ref{thm1intro} with an application to Bruhat--Tits trees $\mathscr{T}$ over non-Archimedean local fields $F$.
For each integer $m\geq 1$ and cohomology class
$\alpha \in H^2(\GL_2(F) ,\mu_m(F))$
valued in the roots of unity of the field,
we use \eqref{j} to construct an $m$-fold cover
\[
\mathscr{T}^{(m)} := \mathscr{T} \rtimes_{\sigma^{(\alpha)}} \mu_m(F)
\]
that we equip with the structure of a connected tree on which the $m$-fold covering group $\GL_2(F)^{(m)}$ of $\GL_2(F)$ acts by automorphisms in the category of locally finite connected trees.
In the setting of metaplectic groups,
the connected tree $\mathscr{T}^{(m)}$ provides a geometric analog of the Bruhat--Tits tree $\mathscr{T}$ for $\GL_2(F)$.

Once a geometric model with a $\widetilde{G}$-action is fixed, we may investigate some of the features that are relevant for harmonic analysis on $\widetilde{G}$. 
In Section~\ref{Gelfand_pairs} we study Gelfand pairs arising from the action of covering groups on locally finite, infinite trees. Assuming a suitable polar decomposition and transitivity of the compact stabilizer on the boundary of the tree, we prove that $(\widetilde G,\widetilde K)$ is a Gelfand pair. 

\begin{theorem} \label{GelfandIntro}
Let $G$ be a locally compact group and 
let $X$ be a locally finite, infinite tree with a transitive $G$-action. 
Let $L$ be a geodesic line in $X$, let $x \in L$, and fix a maximal compact subgroup $K=\operatorname{Stab}_G(x)$.
Assume that $K$ acts transitively on the virtual boundary $\partial X$
and that the $G$--orbit through $x\in L$ is equal to $L$.
Then $(\widetilde{G},\widetilde{K})$ is a Gelfand pair. 
\end{theorem}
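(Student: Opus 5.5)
We will use Gelfand's trick in the form: $(\widetilde G,\widetilde K)$ is a Gelfand pair as soon as there is a continuous involutive anti-automorphism $\iota$ of $\widetilde G$ with $\iota(\widetilde K)=\widetilde K$ and $\iota(\tilde g)\in \widetilde K\tilde g\widetilde K$ for every $\tilde g\in\widetilde G$. The simplest choice is $\iota(\tilde g)=\tilde g^{-1}$. Then the statement reduces to showing that every $\widetilde K$-double coset in $\widetilde G$ is stable under inversion. As an alternative, one shows directly that the Hecke algebra $C_c(\widetilde K\backslash\widetilde G/\widetilde K)$ is commutative. Here $\widetilde K$ is the preimage of $K$ in $\widetilde G$, so $\widetilde K\cong K\times T$, since $T$ is central and the extension splits over $K$. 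We must also check that $\widetilde K$ is compact, or else replace it by the compact subgroup $K\times\{1\}$ coming from the splitting. I will work with the splitting copy $K$ together with the central $T$, which commutes with everything.

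\emph{Step 1 (polar decomposition downstairs).} Because $G$ is transitive on $X$ and $K=\operatorname{Stab}_G(x)$ is transitive on $\partial X$, $K$ acts transitively on each sphere $S(x,n)$, since geodesic rays from $x$ exhaust each sphere. Let $L$ be the line through $x$. The hypothesis that the $G$-orbit of $x$ meets $L$ in all of $L$ gives, for each $n\ge0$, an element $a_n\in G$ with $a_n x=x_n$, where $x_n\in L$ is at distance $n$ from $x$. We get $G=\bigsqcup_{n\ge0}Ka_nK$, with $Ka_nK=\{g: d(x,gx)=n\}$. Since $d(x,g^{-1}x)=d(gx,x)$, each double coset $KgK$ is inverse-stable. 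This recovers the classical fact that $(G,K)$ is a Gelfand pair.

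\emph{Step 2 (lifting).} Lift each $a_n$ to some $\tilde a_n\in\widetilde G$. Since $T$ is central and $\widetilde K\supseteq T$, we obtain $\widetilde G=\bigsqcup_n \widetilde K\tilde a_n\widetilde K$, where the preimage of $Ka_nK$ is exactly $\widetilde K\tilde a_n\widetilde K$. It remains to check that $\tilde a_n^{-1}\in\widetilde K\tilde a_n\widetilde K$. Step 1 gives $a_n^{-1}=k_1a_nk_2$ in $G$. Lifting this relation yields $\tilde a_n^{-1}=\tilde k_1\tilde a_n\tilde k_2\, t$ for some $t\in T$, and $t\in\widetilde K$. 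Hence $\tilde g^{-1}\in\widetilde K\tilde g\widetilde K$ for all $\tilde g$, and Gelfand's lemma applies.

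\emph{Main obstacle.} The main difficulty is compactness. If $T$ is not compact, $\widetilde K$ is not compact and "Gelfand pair" must be read appropriately. If one insists instead on the compact subgroup $K^s$, the image of the splitting, then the central factor $t$ above can no longer be absorbed. One then has to work with the genuine Hecke algebra of $T$-equivariant functions with a fixed central character, and the $t$ obstructs the involution trick. For $T$ compact (e.g.\ $T=\mu_m(F)$), $\widetilde K=K^s\times T$ is compact and the argument above is complete. This is the reading I will adopt. I will use the cover tree $\widetilde X$ of Theorem~\ref{thm1intro} only to interpret $\widetilde K$ as a vertex stabilizer, so that $\widetilde G/\widetilde K\cong X$.
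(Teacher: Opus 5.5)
Your proposal is correct and follows essentially the paper's route: reduce via Lemma~\ref{doublecoset} to showing $\tilde g^{-1}\in\widetilde K\tilde g\widetilde K$, get inverse-stability downstairs from the transitivity of $K$ on $\partial X$, and lift it by absorbing the central $T$ into $\widetilde K$. The difference is only in presentation: the paper does the lifting through $\widetilde K=\operatorname{Stab}_{\widetilde G}(x_0)$ in $\widetilde X$ and Lemma~\ref{lem::tilde_KAK}, whereas you use directly that $\widetilde K$ is the full preimage of $K$. Your compactness caveat does not arise here, since Assumption~\ref{Assumption1} takes $T$ finite.
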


The proof is geometric in nature and yields an alternative approach to Gelfand-pair results for central extensions of locally compact groups. 

Finally, we study contraction and parabolic subgroups associated with lifts of translation elements in $G$ to covering groups. We describe these subgroups in terms of the geometry of the tree $\widetilde X$ and relate them to stabilizers of ends of the tree. 
This provides a geometric interpretation of certain structural features of metaplectic groups. 

%The paper is organized as follows. Section~3 develops the cocycle lifting construction. Section~4 constructs the covering tree and the corresponding group action. Section~5 establishes the Gelfand-pair property. Finally, Section~6 studies contraction and parabolic subgroups associated with the covering group.

\subsection{Acknowledgments}
This work was supported by VILLUM FONDEN through grants VIL53023 (to C.C.) and VIL54509 (to P.V.U.).

\section{Formula for a twisted cocycle}
\label{twisted_cocycle}

Let $G$ be a locally compact topological group and let $K \leq G$ be a compact subgroup. Let $T$ be an abelian locally compact topological group, not necessarily contained in $G$. 

\subsection{Central extensions defined by a cocycle}

Suppose we are given a map
\[
\alpha : G \times G \to T
\]
satisfying the cocycle identity
\begin{equation}\label{equ:cocycle_iden}
\alpha(g_1 g_2, g_3)\,\alpha(g_1, g_2)
    \;=\;
\alpha(g_1, g_2 g_3)\,\alpha(g_2, g_3),
\end{equation}
for all $g_1,g_2,g_3\in G$, and normalized so that
\[
\alpha(g,1) = \alpha(1,g) = 1 \qquad \text{for all } g\in G.
\]

Then $\alpha$ is an element of $H^2(G,T)$ and defines a group structure on the set
\begin{equation}\label{equ:group_structure}
\widetilde{G} := G \times T \text{ that is given by } (g_1,t_1)\,(g_2,t_2): = (g_1 g_2,\; \alpha(g_1,g_2)\, t_1 t_2).
\end{equation}

We denote this extension by
\[
\widetilde{G} = G \times_{\alpha} T.
\]

\subsection{Splitting over a subgroup}
\begin{definition}
    We say that a closed subgroup $H$ of $G$ splits in $\widetilde{G}$ if there exists a map $s : H \to T$ such that
\[
\alpha(h_1,h_2)
    = s(h_1 h_2)\, (s(h_1)\, s(h_2))^{-1},
    \qquad \forall\, h_1,h_2\in H.
\]
\end{definition}
If $\widetilde{G}$ splits over $H$, one may verify using the group structure from (\ref{equ:group_structure}) that the map
\[
h \longmapsto (h, s(h))
\]
is a group homomorphism from $H$ to $\widetilde{G}$ that is, in addition, injective.

\begin{remark}
Let $\alpha \in H^2(G,T)$. 
A closed subgroup $H$ splits in $\widetilde{G} = G\times_\alpha T$ if and only if $\alpha$ belongs to 
the kernel of the restriction map
$\operatorname{res}: H^2(G, T)  \to H^2(K,T).$
\end{remark}

Examples and non-examples of subgroups of $G$ that split in $\widetilde{G}$ are given at the beginning of Section~\ref{sec::connected_tree} below.

\begin{remark}
Let $F$ be a non-Archimedean local field of characteristic zero with ring of integers $\mathcal{O}_F$. 
Define $G:=\GL_2(F)$ with a maximal compact subgroup $K := \GL_2(\mathcal{O}_F)$ and choose $T = \Z/2\Z$.
For all integers $N \equiv 0 \bmod 4$, the closed subgroup
\[
K^N :=\left \{ \begin{psmallmatrix} a & b \\ c & d \end{psmallmatrix} \in K \mid a \equiv 1 \bmod N, c\equiv 0 \bmod N \right \}
\]
splits in $\widetilde{G}$, and if $F$ has odd residual characteristic, then $K^N = K$ \cite[pp. 17-18]{gelbart}.  
\end{remark}

In this article, we will assume that the compact subgroup $K$ of $G$ splits in $\widetilde{G}$. 
After modifying $\alpha$ by a cohomologous cocycle using the splitting $s: K \to T$, we may and do assume that
\[
\alpha|_{K \times K} = 1.
\]

\subsection{Cross-sections and the associated cocycle}

Consider the exact sequence
\[
1 \to K \to G \xrightarrow{\;\pi\;} G/K \to 1.
\]
A cross-section is a map 
\[
\theta : G/K \to G \text{ such that }\theta(x)\,K = x.
\]
To such a cross-section one associates the map
\begin{equation}\label{sig}
\sigma : G \times (G/K) \to K \text{ defined by }
\sigma(g,x)
    := \theta(gx)^{-1}\, g\, \theta(x) \in K.
\end{equation}
It is easy to verify that $\sigma$ is well defined, takes values in $K$, and satisfies the cocycle identity.

%Let $G$ be a locally compact topological group with a maximal compact subgroup $K$. Let $T$ be a subgroup of a torus in $G$.
%Suppose $\alpha: G\times G \to T$ satisfies the cocycle identity
%\begin{equation}
%\label{equ::cocycle_iden}
 %   \alpha(g_1 g_2, g_3) \alpha(g_1, g_2) = \alpha(g_1, g_2 g_3 ) \alpha(g_2,g_3)
%\end{equation}
%for all $g_1,g_2,g_3\in G$ and that 
%$\alpha(g,1) = \alpha(1,g) = 1$ for all $g\in G$. Then %$\alpha \in H^2(G,T)$ defines a group law on the set  %$\tilde{G} = G\times T$ by
%\[
%(g_1,t_1)(g_2,t_2) = (g_1g_2, \alpha(g_1,g_2) t).
%\]
%We may write $\tilde{G} = G \times_\alpha T$ to emphasize the dependence on $\alpha \in H^2(G,T)$. Fix some maximal compact $K$ in $G$ that splits over $\tilde{G}$ in the sense that there is a map $s: K \to T$ such that $\alpha(k_1,k_2)= (s(k_1k_2))^{-1}s(k_1)s(k_2)$ for all $k_1,k_2\in K$.
%Then  $k \mapsto (k,s(k))$ is a homomorphism that embeds $K$ in $\tilde{G}$. Without loss of generality, we will assume that $\alpha|_{K\times K } = 1$.

%A cross-section for the sequence  $1\to K \to G \to G/K \to 1$ is a map $$\theta: G/K \to G \text{ such that } \theta(x) K = x,$$ for all $x\in G/K$. With it there is associated the cocycle $$\sigma: G \times G/K \to K \text{ given by } (g,x) \mapsto \sigma(g,x): = \theta (gx)^{-1} g\theta (x) \in K.$$

\begin{theorem} \label{twisted}
Keeping the notation introduced above, let 
\(\widetilde{G} = G \times_{\alpha} T\), and fix a cross-section 
\(\theta : G/K \to G\) with associated cocycle \(\sigma\) given in 
\eqref{sig}. Define
\[
\sigma^{(\alpha)} : \widetilde{G} \times (G/K) \to T \quad  \text{ by } \quad
\sigma^{(\alpha)}(\tilde{g}, x)
    := \alpha\!\left(g \theta(x),\, \sigma(g,x)^{-1}\right)
       \alpha\!\left(g, \theta(x)\right) t,
\]
for \(\tilde{g}: = (g,t) \in \widetilde{G}\) and \(x \in G/K\).

Then \(\sigma^{(\alpha)}\) satisfies the cocycle identity
\[
\sigma^{(\alpha)}(\tilde{g}_1,\, g_2 x)\;
\sigma^{(\alpha)}(\tilde{g}_2,\, x)
    = \sigma^{(\alpha)}(\tilde{g}_1 \tilde{g}_2,\, x),
\]
for all \(x \in X\) and all \(\tilde{g}_k = (g_k, t_k) \in \widetilde{G}\),
\(k = 1,2\).
\end{theorem}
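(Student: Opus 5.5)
The plan is to avoid expanding the cocycle identity for $\alpha$ by hand. Instead, I would read $\sigma^{(\alpha)}(\tilde g,x)$ as the $T$-component of a canonical decomposition inside $\widetilde G$, and then deduce the identity from associativity in $\widetilde G$.

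\emph{Setup.} Put $\tilde\theta(x):=(\theta(x),1)\in\widetilde G$. Since $\alpha|_{K\times K}=1$, the map $\iota:K\to\widetilde G$, $k\mapsto(k,1)$, is an injective homomorphism. In particular $\iota(k)^{-1}=(k^{-1},1)$. By the normalization $\alpha(g,1)=\alpha(1,g)=1$, the elements $(1,s)$ with $s\in T$ are central in $\widetilde G$, and $(h,1)(1,s)=(h,s)$.

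\emph{Key identity.} I will show that for all $\tilde g=(g,t)\in\widetilde G$ and $x\in G/K$,
\[
\tilde g\,\tilde\theta(x)\;=\;\tilde\theta(gx)\,\bigl(1,\sigma^{(\alpha)}(\tilde g,x)\bigr)\,\iota(\sigma(g,x)).
\]
\begin{enumerate}
\item Compute $\tilde g\,\tilde\theta(x)=(g\theta(x),\alpha(g,\theta(x))\,t)$ from the group law \eqref{equ:group_structure}.
\item Multiply on the right by $\iota(\sigma(g,x))^{-1}=(\sigma(g,x)^{-1},1)$. Using $g\theta(x)\sigma(g,x)^{-1}=\theta(gx)$ from \eqref{sig}, this gives
\[
\bigl(\theta(gx),\,\alpha(g\theta(x),\sigma(g,x)^{-1})\,\alpha(g,\theta(x))\,t\bigr)=\tilde\theta(gx)\,\bigl(1,\sigma^{(\alpha)}(\tilde g,x)\bigr).
\]
\end{enumerate}
This is exactly the defining formula for $\sigma^{(\alpha)}$.

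\emph{Uniqueness.} Any element of the form $\tilde\theta(y)(1,s)\iota(k)$ equals $(\theta(y)k,\ \alpha(\theta(y),k)\,s)$. So once $y$ is fixed, the first coordinate determines $k$, because $\theta(y)$ is a fixed element. Then the second coordinate determines $s$, because the factor $\alpha(\theta(y),k)$ depends only on $y$ and $k$.

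\emph{Conclusion.} Apply the key identity twice:
\[
\tilde g_1\tilde g_2\,\tilde\theta(x)=\tilde g_1\,\tilde\theta(g_2x)\,z_2\,\iota(k_2)=\tilde\theta(g_1g_2x)\,z_1\,\iota(k_1)\,z_2\,\iota(k_2),
\]
where $z_1=(1,\sigma^{(\alpha)}(\tilde g_1,g_2x))$, $z_2=(1,\sigma^{(\alpha)}(\tilde g_2,x))$, $k_1=\sigma(g_1,g_2x)$ and $k_2=\sigma(g_2,x)$. Since the $z_i$ are central and $\iota$ is a homomorphism, the right-hand side equals
\[
\tilde\theta(g_1g_2x)\,\bigl(1,\sigma^{(\alpha)}(\tilde g_1,g_2x)\,\sigma^{(\alpha)}(\tilde g_2,x)\bigr)\,\iota(k_1k_2).
\]
On the other hand, the key identity applied directly to $\tilde g_1\tilde g_2$, whose $G$-component is $g_1g_2$, gives the decomposition
\[
\tilde\theta(g_1g_2x)\,\bigl(1,\sigma^{(\alpha)}(\tilde g_1\tilde g_2,x)\bigr)\,\iota(\sigma(g_1g_2,x)).
\]
The cocycle identity for $\sigma$ gives $k_1k_2=\sigma(g_1g_2,x)$. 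Uniqueness then forces the $T$-parts to agree, which is the claimed identity.

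The only delicate point is the bookkeeping in the key identity. This means tracking the $\alpha$-factors, the inverse in $\widetilde G$, and centrality, all of which rely on the normalization of $\alpha$ and on $\alpha|_{K\times K}=1$. Once that identity is in place, the rest is formal.
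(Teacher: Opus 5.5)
Your proof is correct. Its key identity is the paper's Claim 1 in different clothing: since $(1,s)\,\iota(k)=(k,s)$, it says exactly that $\theta'(gx)^{-1}\,\tilde g\,\theta'(x)=(\sigma(g,x),\sigma^{(\alpha)}(\tilde g,x))$ for the lifted section $\theta'(x)=(\theta(x),1)$. That is how the paper introduces its $K\times T$-valued map $\sigma'$.

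The execution differs in both halves, though.

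\emph{Proof of Claim 1.} The paper writes out $\theta'(gx)^{-1}=(\theta(gx)^{-1},\alpha(\theta(gx),\theta(gx)^{-1})^{-1})$. It then needs two auxiliary instances of the cocycle identity for $\alpha$ to cancel the resulting factors. You instead right-multiply by $\iota(\sigma(g,x))^{-1}=(\sigma(g,x)^{-1},1)$, which is legitimate because $\alpha|_{K\times K}=1$. The identity then drops out of one application of the group law.

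\emph{Cocycle identity.} The paper verifies the cocycle identity for $\sigma'$ by a page-long direct manipulation of $\alpha$-values. You get it formally from associativity of $\widetilde G$, centrality of $\{1\}\times T$, and uniqueness of the normal form $\tilde\theta(y)(1,s)\iota(k)$. Equivalently, your argument telescopes $\theta'(g_1g_2x)^{-1}\tilde g_1\theta'(g_2x)\cdot\theta'(g_2x)^{-1}\tilde g_2\theta'(x)$, which Claim 1 already allows but the paper does not use.

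\emph{What each buys.} Your route is shorter and much less error-prone. It makes clear that the cocycle property of $\alpha$ enters only through associativity of $\widetilde G$. It also adapts directly to an arbitrary splitting $s:K\to T$ without first normalizing $\alpha$ on $K\times K$: take $\iota(k)=(k,s(k))$, which changes the formula by the factor $s(\sigma(g,x)^{-1})$. The paper's computation, in exchange, keeps every individual $\alpha$-factor visible.
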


We will refer to $\sigma^{(\alpha)}$ 
as the $\alpha$-twisted cocycle obtained from $\sigma$. 

\begin{corollary}
\label{cor::corollary_twisted_action}
Let $X:= G/K$. Then the formula for $\tilde{g} \in \widetilde{G}$, $x \in G/K$, and $\xi \in T$,
\[
\tilde{g} (x,\xi) 
:= \left( gx, \sigma^{(\alpha)}( \tilde{g} , x ) \xi 
\right)
\]
defines an action of $\widetilde{G}$ on the space
$
\widetilde{X} := X \times_{\alpha,\theta} T
$
that extends the natural action of $G$ on $X$
by giving a commutative diagram
\begin{center}
\begin{tikzcd}
    \widetilde{G}\times \widetilde{X} \arrow[r, dashed] \arrow[d] & \widetilde{X} \arrow[d] \\
    G\times X \arrow[r] & X
\end{tikzcd}
\end{center}
whose vertical arrows are the canonical projections. 
\end{corollary}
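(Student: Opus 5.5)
The corollary follows from Theorem~\ref{twisted} by a direct check of the action axioms, so I will take the cocycle identity as given and verify three things: the identity acts trivially, the action is compatible with multiplication in $\widetilde{G}$, and the projections commute with the actions.

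\textbf{Identity.} The identity of $\widetilde{G}$ is $(1,1)$, because $\alpha$ is normalized. For this element we have $\sigma(1,x)=\theta(x)^{-1}\theta(x)=1$. Hence
\[
\sigma^{(\alpha)}((1,1),x)=\alpha(\theta(x),1)\,\alpha(1,\theta(x))\cdot 1 = 1,
\]
again by normalization. It follows that $(1,1)(x,\xi)=(x,\xi)$.

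\textbf{Compatibility with multiplication.} Take $\tilde g_k=(g_k,t_k)$ for $k=1,2$. Then
\[
\tilde g_1(\tilde g_2(x,\xi)) = \bigl(g_1g_2x,\ \sigma^{(\alpha)}(\tilde g_1,g_2x)\,\sigma^{(\alpha)}(\tilde g_2,x)\,\xi\bigr).
\]
Since $T$ is abelian, the product of the two $T$-factors may be taken in either order. By Theorem~\ref{twisted}, this product equals $\sigma^{(\alpha)}(\tilde g_1\tilde g_2,x)$. The first coordinate of $\tilde g_1\tilde g_2$ is $g_1g_2$ by \eqref{equ:group_structure}, so the right-hand side is exactly $(\tilde g_1\tilde g_2)(x,\xi)$. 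This gives the action axiom.

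\textbf{Commutative diagram.} The first coordinate of $\tilde g(x,\xi)$ is $gx$, where $g$ is the image of $\tilde g$ under the canonical projection $\widetilde{G}\to G$. Therefore projecting $\widetilde{G}\times\widetilde{X}\to G\times X$ and then acting gives the same result as acting and then projecting $\widetilde{X}\to X$.

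For ``extends'', I interpret the statement as the claim that the action covers the $G$-action in this sense. Additionally, the central subgroup $\{(1,t)\}$ acts by $\xi\mapsto t\xi$ on the fibres, since $\sigma^{(\alpha)}((1,t),x)=t$. On the level of sets there is no real obstacle. The only point needing care is the order of the factors, which commutativity of $T$ handles. If continuity of the action is also required, that would need measurability or continuity hypotheses on $\theta$ and $\alpha$. I would note this but not pursue it, since $X=G/K$ is discrete in the tree application.
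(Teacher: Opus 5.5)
Your proposal is correct and is essentially the paper's argument. The action axiom is exactly the cocycle identity of Theorem~\ref{twisted}, as in the remark following its proof, where the same computation is carried out for $\sigma'$ with values in $K\times T$. Your checks of the identity element (via normalization of $\alpha$) and of the commutative diagram are routine points that the paper leaves implicit.
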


\begin{proof}[Proof of Theorem \ref{twisted}]
%The cocycle identity  (\ref{equ::cocycle_iden}) for $\alpha$, taking $g_1: = g \theta(x), g_2 :=\sigma(g,x)^{-1}$, and $g_3:= \sigma(g,x)$,  implies: 
%\begin{equation*}
   % \alpha( g \theta(x) ,\sigma(g,x)^{-1} \sigma(g,x) ) \alpha(  %\sigma(g,x)^{-1} , \sigma(g,x)  ) =
%\alpha(g \theta(x)   \sigma(g,x)^{-1} , \sigma(g,x) )\alpha( g \theta(x), \sigma(g,x)^{-1} ).
%\end{equation*}
%Since $\alpha$ is trivial on $K\times K$, $\sigma(g,x) \in K$, and $\alpha(g,1 ) = 1$ for all $g\in G$, the above equality is equivalent to
%\begin{equation}
%\label{equ::first_eq}
%\begin{split}
   % 1 &=
%\alpha(g \theta(x)  \sigma(g,x)^{-1} , \sigma(g,x) )\alpha( g \theta(x), %\sigma(g,x)^{-1} )\\
%& = \alpha(\theta(gx), \sigma(g,x) )\alpha( g \theta(x), \sigma(g,x)^{-1} ).
%\end{split}
%\end{equation}

%By taking $g_1: = g, g_2 :=\theta(x)$, and $g_3:= \sigma(g,x)^{-1}$ in the cocycle identity (\ref{equ::cocycle_iden}) we get:
%\begin{align}
%\label{equ::second_eq}
 %   \alpha(g \theta(x), \sigma(g,x)^{-1}) \alpha(g, \theta(x)) &= \alpha(g, \theta(x) \sigma(g,x)^{-1}) \alpha( \theta(x) ,  \sigma(g,x)^{-1}) 
%\end{align}
%for all $g \in G$ and $x\in X$. \red{Where do you use (\ref{equ::second_eq})?}
%In particular, using that $\alpha$ is trivial on $K\times K$ and that $\alpha(g,1) = 1$ for all $g\in G$, 
%\[
%1 = \alpha(g \theta(x)   \sigma(g,x)^{-1} , \sigma(g,x) )\alpha( g \theta(x), \sigma(g,x)^{-1} ).
%\]

Let $X:= G/K$.
We prove the theorem by showing that for any 
\(\tilde{g} = (g,t) \in \widetilde{G}\) and any \(x \in X\), the assignment
\begin{equation}
\label{equ::cocycle_sigma_tilde}
\sigma'(\tilde{g}, x)
    := \bigl(\sigma(g,x),\;
        \alpha(g \theta(x),\, \sigma(g,x)^{-1})
        \alpha(g, \theta(x))\, t \bigr)
        \in K \times T
\end{equation}
defines a map \(\sigma' : \widetilde{G} \times X \to K \times T\)
which moreover satisfies the cocycle identity
\[
\sigma'(\tilde{g}_1,\, g_2 x)\;
\sigma'(\tilde{g}_2,\, x)
    = \sigma'(\tilde{g}_1 \tilde{g}_2,\, x),
\]
for all \(x \in X\) and all \(\tilde{g}_1=(g_1, t_1), \tilde{g}_2 =(g_2,t_2) \in \widetilde{G}\).
%This establishes that \(\sigma'\) is indeed a cocycle on \(\widetilde{G} \times X\) with values in \(K\times T\).

\medskip
To proceed, we first show that \(\sigma'\) is the map naturally associated with the lifted cross–section
\[
\theta' : X \longrightarrow G \times \{1\} \subset \widetilde{G}, 
\qquad 
\theta'(x) := (\theta(x), 1).
\]
Thus \(\theta'\) may be regarded as a cross–section of the homogeneous space \(X = G/K\), but taking values in the enlarged group \(\widetilde{G}\).

\textbf{Claim 1}. We have that
$$\sigma'(\tilde{g}, x) =  \theta' \left(\tilde{g} x \right)^{-1} \tilde{g} \theta'(x).$$ 
Indeed, we impose that \(\widetilde{G}=G\times T\) acts on \(X=G/K\) via its projection onto
\(G\), so we define that \(\tilde{g}x=(g,t)x: = g x\). Thus,
\[
\theta'(\tilde{g}x)
    = (\theta(gx),1)
    \quad\Longrightarrow\quad
\theta'(\tilde{g}x)^{-1}
    = \bigl(\theta(gx)^{-1},\,
        \alpha(\theta(gx),\theta(gx)^{-1})^{-1}\bigr).
\]
Similarly,
\[
\tilde{g}\,\theta'(x)
    = (g,t)\,(\theta(x),1)
    = \bigl(g\theta(x),\, \alpha(g,\theta(x))\, t\bigr).
\]
Multiplying these two expressions in \(\widetilde{G}\) yields the identity:
\begin{equation*}
    \begin{split}
        \theta' \left(\tilde{g} x \right)^{-1} \tilde{g} \theta'(x)& = (\theta(gx)^{-1}, \alpha(\theta(gx), \theta(gx)^{-1})^{-1}) (g \theta(x), \alpha(g, \theta(x))t)\\
        & = (\sigma(g, x), \alpha(\theta(gx)^{-1},g \theta(x)) \alpha(\theta(gx), \theta(gx)^{-1})^{-1} \alpha(g, \theta(x))t).
    \end{split}
\end{equation*}
It remains to verify that we indeed obtain the second term of (\ref{equ::cocycle_sigma_tilde}), which is equivalent to verifying:
\begin{equation}
\label{equ::eq_to_verify}
\begin{split}
    &\alpha(\theta(gx)^{-1},g \theta(x)) \alpha(\theta(gx), \theta(gx)^{-1})^{-1} = \alpha(g \theta(x) ,\sigma(g,x)^{-1}) \\
&\Longleftrightarrow \alpha(\theta(gx)^{-1},g \theta(x)) = \alpha(\theta(gx), \theta(gx)^{-1})  \alpha(g \theta(x) ,\sigma(g,x)^{-1}).
\end{split}
\end{equation}

Indeed, taking $g_1: = \theta(gx), g_2 :=g_1^{-1}$ and $g_3:= g\theta(x)$ in the cocycle identity (\ref{equ:cocycle_iden}) we get:
\begin{equation}
\label{equ::second_eqq}
\begin{split}
   \alpha(\theta(gx), \theta(gx)^{-1}) &=  \alpha(\theta(gx), \theta(gx)^{-1}) \alpha(1,g\theta(x))\\
   &= \alpha(\theta(gx), \sigma(g,x)) \alpha( \theta(gx)^{-1} ,g\theta(x)).
\end{split}
\end{equation}
Using (\ref{equ::second_eqq}), the last term of (\ref{equ::eq_to_verify}) becomes
\begin{equation}
\label{equ::second_eqqq}
\begin{split}
   \alpha(\theta(gx), \theta(gx)^{-1})  \alpha(g \theta(x) ,\sigma(g,x)^{-1})
   &= \alpha(\theta(gx), \sigma(g,x)) \alpha( \theta(gx)^{-1} ,g\theta(x)) \alpha(g \theta(x) ,\sigma(g,x)^{-1})\\
   &= \alpha( \theta(gx)^{-1} ,g\theta(x)) \alpha(\theta(gx), \sigma(g,x)) \alpha(g \theta(x) ,\sigma(g,x)^{-1})\\
   &= \alpha( \theta(gx)^{-1} ,g\theta(x))\\
\end{split}
\end{equation}
which is exactly what we wanted in (\ref{equ::eq_to_verify}).
The last equality of \eqref{equ::second_eqqq} follows from the cocycle identity \eqref{equ:cocycle_iden} with $g_1 := g \theta(x), g_2:= \sigma(g,x)^{-1}, g_3:= \sigma(g,x)$, and since $\alpha$ is trivial on $K\times K$, $\sigma(g,x)= \theta(gx)^{-1}\, g\, \theta(x)  \in K$, and $\alpha(g,1) = 1$:
$$\alpha(\theta(gx), \sigma(g,x)) \alpha(g \theta(x) ,\sigma(g,x)^{-1}) = \alpha(g \theta(x) ,1) \alpha(\sigma(g,x)^{-1} ,\sigma(g,x))=1.$$
This finish the proof of \textbf{Claim 1}.
\medskip

\noindent The final step is to verify the cocycle identity for $\sigma'$. 
On the one hand, we have the formula 
\begin{equation}\label{g_1g_2}
\sigma'(\tilde{g}_1\tilde{g}_2, x)
= ( \sigma(g_1g_2,x), \alpha(g_1 g_2 \theta x, \sigma(g_1g_2,x)^{-1})
\alpha(g_1 g_2, \theta x) 
\alpha(g_1, g_2 ) t_1t_2 ).
\end{equation}
On the other hand, we use  \eqref{equ::second_eqqq} to deduce that
\begin{align*}
\sigma'(\tilde{g}_1,  \tilde{g}_2  x)\sigma'(\tilde{g}_2, x) &=
    (\sigma(g_1,  g_2x), \frac{ \alpha(  (\theta  g_1 g_2 x)^{-1},  g_1 \theta  g_2x  )\alpha(g_1, \theta  g_2x) }{ \alpha(\theta  g_1 g_2x,  (\theta g_1 g_2 x)^{-1} )}t_1) \\
    &\times
    (\sigma(g_2,x), \frac{ \alpha(  (\theta  g_2x)^{-1},  g_2 \theta  x  )\alpha(g_2, \theta  x) }{ \alpha(\theta  g_2x,  (\theta g_2x)^{-1} )}t_2)\\
    &= 
    \bigg( \sigma(g_1g_2,x), \alpha( \sigma(g_1, g_2 x), \sigma  (g_2, x)) \\
&\times 
\frac{ \alpha(  (\theta  g_1 g_2 x)^{-1},  g_1 \theta  g_2x  )\alpha(g_1, \theta  g_2x) }{ \alpha(\theta  g_1 g_2x,  (\theta g_1 g_2 x)^{-1} )}  \\
&\times 
    \frac{ \alpha(  (\theta  g_2x)^{-1},  g_2 \theta  x  )\alpha(g_2, \theta  x) }{ \alpha(\theta  g_2x,  (\theta g_2x)^{-1} )} t_1 t_2 \bigg) 
\end{align*}
Then, 
we use the identity $
    \theta(g_1g_2x) = 
     g_1 g_2\theta(x) \sigma(g_1g_2, x)^{-1}$
for $g_1,g_2\in G$ and $x\in X$
to rewrite the coordinate in $T$ as
\begin{IEEEeqnarray*}{l}
\alpha( \sigma(g_1, g_2 x), \sigma  (g_2, x))  
\frac{ \alpha( \sigma(g_1g_2,x) (\theta x)^{-1} g_2^{-1} g_1^{-1} ,  g_1 \theta  g_2x  )\alpha(g_1, \theta  g_2x)}{ \alpha(g_1 g_2  \theta  (x)  \sigma(g_1g_2, x)^{-1}, \sigma(g_1g_2, x) (\theta  x)^{-1} g_2^{-1} g_1^{-1})} \\
\times\frac{ \alpha(  (\theta  g_2x)^{-1},  g_2 \theta  x  )\alpha(g_2, \theta  x) }{ \alpha(\theta  g_2x,  (\theta g_2x)^{-1} )} t_1 t_2  \\
= \alpha( \sigma(g_1, g_2 x), \sigma  (g_2, x)) \\ 
\times
  \frac{\alpha( \sigma(g_1g_2,x) (\theta x)^{-1} g_2^{-1} g_1^{-1} , 
 g_1 g_2 \theta  x  
%  \sigma(g_1g_2,x)^{-1} 
%\sigma(g_1, g_2x)\sigma(g_2,x)
%\sigma(g_2,x)^{-1}
%= \sigma(g_1g_2,x)^{-1} \sigma(g_1, g_2x)
\sigma(g_1g_2,x)^{-1} \sigma(g_1, g_2x) )}{\alpha(  \sigma(g_1g_2,x)(\theta  x)^{-1} g_2^{-1} g_1^{-1}, g_1 g_2  \theta  x  \sigma(g_1g_2,x)^{-1} )}   \\ 
\times
\frac{ \alpha(g_1, g_2\theta x \sigma(g_2,x)^{-1}) \alpha(  \sigma(g_2,x) (\theta x)^{-1} g_2^{-1},  g_2 \theta  x  )\alpha(g_2, \theta  x)}{  \alpha(   g_2\theta x \sigma(g_2,x)^{-1} ,\sigma(g_2,x) (\theta x)^{-1} g_2^{-1} ) } t_1t_2 \\
= \alpha( \sigma(g_1, g_2 x), \sigma  (g_2, x)) \\ 
\times
\frac{ \alpha(g_1, g_2\theta x \sigma(g_2,x)^{-1}) \alpha(  \sigma(g_2,x) (\theta x)^{-1} g_2^{-1},  g_2 \theta  x  )\alpha(g_2, \theta  x)}{ 
\alpha(  g_1g_2\theta(x) \sigma(g_1g_2,x)^{-1}, \sigma(g_1,g_2x)) \alpha(   g_2\theta x \sigma(g_2,x)^{-1} ,\sigma(g_2,x) (\theta x)^{-1} g_2^{-1} ) }  
     t_1 t_2 \\
=\alpha( \sigma(g_1, g_2 x), \sigma  (g_2, x)) \\ \times
\frac{ \alpha(g_1, g_2\theta (x) \sigma(g_2,x)^{-1}) 
\alpha(g_2, \theta  x)}{ 
\alpha(  g_1g_2\theta(x) \sigma(g_1g_2,x)^{-1}, \sigma(g_1,g_2x) ) 
\alpha(   g_2\theta x \sigma(g_2,x)^{-1} , \sigma(g_2,x) ) }  
     t_1 t_2 \\
 =    \frac{\alpha( \sigma(g_2, x)^{-1}, \sigma  (g_2, x) ) }{\alpha(   g_2\theta(x) \sigma(g_2,x)^{-1} , \sigma(g_2,x) ) }
\frac{ \alpha(g_1, g_2\theta (x) \sigma(g_2,x)^{-1}) 
\alpha(g_2, \theta  x)}{ 
\alpha(  g_1g_2\theta(x) \sigma(g_1g_2,x)^{-1} , \sigma(g_1,g_2x) )  }  t_1t_2\\
=\alpha(g_2 \theta x ,   \sigma(g_1g_2,x)^{-1} \sigma(g_1, g_2x) )) 
\frac{ \alpha(g_1, g_2\theta (x) \sigma(g_1g_2,x)^{-1} \sigma(g_1, g_2x) )  
}{ 
\alpha(  g_1g_2\theta(x) \sigma(g_1g_2,x)^{-1}, \sigma(g_1,g_2x) )  }  \alpha(g_2, \theta  x) t_1t_2\\
= \alpha(g_2 \theta x ,   
     \sigma(g_1g_2,x)^{-1} \sigma(g_1, g_2x) )
     ) 
\frac{ 
\alpha(g_1, 
g_2 \theta (x) \sigma(g_1g_2,x)^{-1} )
}{
\alpha(g_2 \theta (x) \sigma(g_1g_2,x)^{-1}, \sigma(g_1,g_2x)
 }
\alpha(g_2, \theta  x) t_1t_2 \\
=\frac{\alpha(g_2 \theta x,  \sigma(g_1g_2,x)^{-1})}{\alpha(\sigma(g_1g_2,x)^{-1}, \sigma(g_1,g_2x) )}    
\alpha(g_1, g_2 \theta (x) \sigma(g_1g_2,x)^{-1} )
\alpha(g_2, \theta  x) t_1t_2.
\end{IEEEeqnarray*}
Since $\sigma$ is valued in $K$ and $\alpha$
is trivial on $K\times K$, the last expression
reduces to 
\begin{IEEEeqnarray*}{l}
\alpha(g_2 \theta x,  \sigma(g_1g_2,x)^{-1})   \alpha(g_1, g_2 \theta (x) \sigma(g_1g_2,x)^{-1} )
\alpha(g_2, \theta  x) t_1t_2\\
=\alpha(g_1 g_2 \theta x, \sigma(g_1g_2,x)^{-1})\alpha(g_1, g_2\theta x )  \alpha(g_2, \theta x) t_1t_2\\
=\alpha(g_1 g_2 \theta x, \sigma(g_1g_2,x)^{-1})
\alpha(g_1g_2, \theta  x )\alpha(g_1,  g_2) t_1t_2.
\end{IEEEeqnarray*}
We conclude that  
\begin{align*}
\sigma'(\tilde{g}_1, \tilde{g}_2x) \sigma'( \tilde{g}_2, x) 
&= ( \sigma(g_1g_2,x), \alpha(g_1 g_2 \theta x, \sigma(g_1g_2,x)^{-1})
\alpha(g_1 g_2, \theta x) 
\alpha(g_1, g_2 ) t_1t_2 ) \\
&= \sigma'(\tilde{g}_1\tilde{g}_2, x).    
\end{align*}
\end{proof}

\begin{remark}
   Notice that
\[
\tilde{g}  . (x,  (k,\xi) ) = (g,t)  . (x,  (k,\xi) ) := 
(g x,
 \sigma'(\tilde{g},x) (k,\xi) ) 
\]
defines an action of 
$\tilde{G}$ on $G/K \times (K\times T) $ 
since
\begin{align*}
(g_1,t_1).( (g_2,t_2). (x,(k,\xi) )  &=
(g_1,t_1).  ( g_2 x, \sigma'(\tilde{g}_2,x) (k,\xi) ) \\
&= (g_1g_2x, 
\sigma'( \tilde{g}_1 , g_2  x )
\sigma'(\tilde{g}_2,x)  (k,\xi) )\\
&= (g_1g_2x,\sigma'(\tilde{g}_1 \tilde{g}_2,\, x)(k,\xi)).
\end{align*}
%In addition, the formula
%\[
%(g,t)  . (x,  \xi)  = (gx,\alpha( g\theta(x), \sigma(g,x)^{-1} )\alpha(g, \theta(x))t\xi ) 
%\]
%defines a $\tilde{G}$-action on $G/K \times T$. 
\end{remark}

\section{A connected tree for metaplectic $\GL_2(F)$}
\label{sec::connected_tree}

Let $F$ be a non-Archimedean local field and $A$ an abelian topological group.
Central extensions
\[
1 \to A \to \widetilde{\GL}_n(F) \to \GL_n(F) \to 1
\]
form a large class for $n \ge 2$.

A distinguished subclass of central extensions of $\mathrm{GL}_n(F)$ by $A$ is given
by those arising from \emph{Steinberg cocycles}. More precisely, for $n \ge 2$,
let $\mathrm{E}_n(F)$ denote the \emph{elementary subgroup} of
$\mathrm{GL}_n(F)$, that is, the subgroup generated by the elementary matrices
\[
E_{ij}(r) = I_n + r e_{ij}, \qquad i \neq j,\; r \in F,
\]
where $e_{ij}$ denotes the matrix with a $1$ in the $(i,j)$-entry and zeros elsewhere.

For each $n \ge 2$, the \emph{Steinberg group} $\mathrm{St}_n(F)$ is defined by
generators $X_{ij}(r)$, subject to the Steinberg relations, and there is a natural surjective homomorphism
\[
\varphi_n \colon \mathrm{St}_n(F) \longrightarrow \mathrm{E}_n(F)
\subset \mathrm{GL}_n(F)
\]
sending $X_{ij}(r)$ to $E_{ij}(r)$. Set $\mathrm{St}(F)$ and $\mathrm{GL}(F)$ to be the colimits of the groups
$\mathrm{St}_n(F)$ and $\mathrm{GL}_n(F)$, respectively. The homomorphisms
$\varphi_n$ induce a group homomorphism
\[
\varphi \colon \mathrm{St}(F) \longrightarrow \mathrm{GL}(F),
\]
whose image is the colimit $\mathrm{E}(F)$ of the groups $\mathrm{E}_n(F)$. Define $K_2(F) := \ker(\varphi)$. The group $K_2(F)$ is abelian and coincides with the center of $\mathrm{St}(F)$; moreover, it is independent of $n$. The group $K_2(F)$ measures precisely how much the lifts of commuting elementary matrices fail to commute in the Steinberg group $\mathrm{St}_n(F)$.

For $n \ge 3$, the group $\mathrm{E}_n(F)$ is perfect, and the extension
\[
1 \longrightarrow K_2(F) \longrightarrow \mathrm{St}_n(F)
\longrightarrow \mathrm{E}_n(F) \longrightarrow 1
\]
is the universal central extension of $\mathrm{E}_n(F)$. In particular, for
any abelian group $A$, central extensions of $\mathrm{E}_n(F)$ by $A$ are in
natural bijection with homomorphisms $K_2(F) \to A$, or equivalently with
cohomology classes in $H^2(\mathrm{E}_n(F),A)$. All such extensions arise from the \emph{Steinberg cocycles} with values in $A$.

In contrast, for $n = 2$, the group $\mathrm{E}_2(F) = \mathrm{SL}_2(F)$ is not
perfect, and $\mathrm{St}_2(F)$ is no longer a universal central extension. Nevertheless, Steinberg cocycles with values in $A$ still define a distinguished subclass of central extensions of $\mathrm{SL}_2(F)$, which correspond to quotients of the Steinberg central extension $\mathrm{St}_2(F) \to \mathrm{SL}_2(F)$.

Across all $n \ge 2$, the above framework encompasses 
the classical metaplectic covers, all Brylinski--Deligne (BD) covers, and their higher-degree analogs. These extensions are completely characterized by splitting uniquely over the unipotent subgroups of $\mathrm{E}_n(F)$. While the diagonal 
elements of the base group themselves commute, their formal lifts to the extension group generally fail to do so. The resulting extension class is entirely dictated 
by the commutators of these lifts, which define the Steinberg symbols and measure the failure of the set-theoretic section to preserve commutativity. Consequently, these symbols identically satisfy the defining relation $\{a,1-a\}=1$ (or $0$ 
in additive notation).

Moreover, it is known that Steinberg cocycles for $E_n(F)$ do not exhaust all central extensions of $\GL_n(F)$. Moreover,
when $A=\mu_m(F)\subset F^\times$ is the group of $m$th roots of unity in the local field $F$, for a fixed integer $m$, one obtains the Brylinski--Deligne (BD) covers of $\GL_n(F)$, which form a further restricted subclass of Steinberg extensions: they are exactly those arising from Weyl-invariant quadratic forms on the cocharacter lattice $X_*(T)$ of a maximal torus $T \subset \GL_n$. Consequently, one has strict inclusions
\[
\text{BD covers}
\;\subsetneq\;
\text{Steinberg cocycles}
\;\subsetneq\;
\text{all central extensions}.
\]

This hierarchy explains why splitting over the hyperspecial maximal compact subgroup $\GL_n(\mathcal O_F)$ is exceptional and occurs only in special situations, such as the tame standard metaplectic cover. More precisely, additional constraints arise from the interaction between the local field $F$ and the degree $m$ of the metaplectic cover.

For the remainder of this section, we impose the following assumptions. Let
\[
A = \mu_m(F) \subset F^\times
\]
be the group of $m$-th roots of unity in the local field $F$, and let $\widetilde{\GL}_n(F)$ denote the corresponding Brylinski--Deligne (BD) cover of $\GL_n(F)$.

We work in the \emph{tame case}, meaning that:
\begin{itemize}
    \item $m$ is coprime to the residue characteristic $q$ of $F$, and
    \item $q \equiv 1 \pmod{2n}$.
\end{itemize}

Under these assumptions, it is known that the $m$-fold metaplectic Brylinski--Deligne cover $\widetilde{\GL}_n(F)$, with central kernel $\mu_m(F)$, admits a splitting over the maximal compact subgroup $\GL_n(\mathcal O_F)$.

We emphasize that this splitting property is specific to this setting and does not extend to arbitrary metaplectic covers (see \cite[Section 1.5]{PATNAIK2017875} and \cite[Section 4]{GanGao2014}).

\medskip
For $n=2$, it is well known that $\GL_2(F)$ acts transitively on the set of vertices of the Bruhat--Tits tree $X$ associated with $\GL_2(F)$. As a consequence, all vertices are equivalent under this action. This transitivity implies that any maximal compact subgroup of $\GL_2(F)$, arising as the stabilizer of a vertex in $X$, is conjugate to any other. In particular, up to conjugacy, there is a unique maximal compact subgroup, namely $\GL_2(\mathcal O_F)$, and so $X \simeq \GL_2(F)/\GL_2(\mathcal O_F)$. 

Since the $m$-fold metaplectic Brylinski--Deligne cover $\widetilde{\GL}_2(F)$ admits a splitting over the maximal compact subgroup $\GL_2(\mathcal O_F)$, we can apply the results from Section \ref{twisted_cocycle} and aim to construct an analog of the Bruhat--Tits tree $X$ of $\GL_2(F)$ on which the metaplectic cover $\widetilde{\GL}_2(F)$ acts by automorphisms.

\begin{assumption} \label{Assumption1}
We work in a more general setting in which $G$ is a locally compact group acting vertex-transitively by automorphisms on a locally finite, infinite tree $X$. This implies that $X$ must be a regular tree. Let $K \subset G$ denote the stabilizer of a vertex of $X$. By vertex-transitivity, all such stabilizers are conjugate, and hence $K$ is, up to conjugacy, a maximal compact subgroup of $G$, in particular, $G/K \simeq X$. Let $T$ be a finite abelian group, and let $\alpha : G \times G \to T$ be a cocycle. We assume that $\alpha$ is trivial on $K \times K$, that is, $\alpha|_{K \times K} = 1$. Under this assumption, the subgroup $K$ splits over the central extension $\widetilde{G} := G \times_{\alpha} T$. We also fix a cross-section $\theta : G/K \to G \text{ such that }\theta(x)\,K = x$, and associate with it the cocycle $\sigma : G \times (G/K) \to K$ as in (\ref{sig}). As explained above, our goal is to construct an analog of the tree $X$ on which the covering group $\widetilde{G}$ acts by automorphisms.
\end{assumption}

\begin{remark}
\label{rem::compact_open_cover}
Under \textbf{Assumption  \eqref{Assumption1}}, the maximal compact-open subgroup $K \leq G$ splits in the metaplectic cover $\widetilde{G}$. Since splitting is inherited by subgroups, it follows that every compact-open subgroup $U \leq K$ also admits a splitting.

Moreover, since $T$ is finite (hence discrete), we have
\[
\widetilde{U} := U \times_{\alpha} T \simeq U \times T,
\]
and in particular $\widetilde{U}$ is locally compact for every compact-open subgroup $U \leq K$.

Finally, since $G$ admits a neighborhood basis of the identity consisting of compact-open subgroups contained in $K$, it follows that the identity of $\widetilde{G}$ also admits a neighborhood basis of the form $U \times \{e\} \leq U \times T$, with $U \subset G$ open. Consequently, $\widetilde{G}$ is endowed with a locally compact topology inherited from $G$.
\end{remark}

\medskip
To do that, we first introduce some notation. Let $Y$ be a graph, and denote by $V(Y)$ its set of vertices. For any $x, y \in V(Y)$, we write
\[
x \stackrel{e}{\sim} y
\]
if $x$ and $y$ are connected by an edge of unit length. In this case, we say that $x$ and $y$ are neighbors in $Y$. We define a metric $d$ on $V(Y)$ as follows. For $x, y \in V(Y)$, set $d(x,y) = n$ if:
\begin{itemize}
    \item there exists a sequence of distinct vertices
    \[
    x_0 = x, x_1, \ldots, x_n = y
    \]
    such that $x_k \stackrel{e}{\sim} x_{k+1}$ for all $0 \leq k \leq n-1$, and
    \item for any other sequence of vertices
    \[
    y_0 = x, y_1, \ldots, y_m = y
    \]
    satisfying $y_k \stackrel{e}{\sim} y_{k+1}$, one has $m \geq n$.
\end{itemize}

In other words, $d(x,y)$ is the length of the shortest path between $x$ and $y$.

\medskip
For the tree $X$ where $G$ acts by automorphisms and for the abelian finite group $T$, we let $\widetilde{X}$ be graph with vertices
\[
V(\widetilde{X}): =  \{v_0, v_{\rho} \; \vert \; v_0 := v \in V(X), \rho \in T\}
\]
and edges, all of unit length, placed as follows:
For $v, w \in V(X)$,  let $v_0 \e w_0$ in $\widetilde{X}$ whenever $v\e w$ in $X$, and for any $v\in V(X)$, and $\rho \in T$, let $v_{\rho} \e v_0$ in $\widetilde{X}$. By its definition, it can be easily noticed that $\widetilde{X}$ is a connected, locally finite infinite tree.

The group $\widetilde{G} = G\times_{\alpha} T$ acts on $\widetilde{X}$ as follows:
For $v \in V(X)$ 
and $\tilde{g} = (g,t) \in \widetilde{G}$,
\begin{enumerate} 
    \item[i)] 
    define  
    $\tilde{g} (v_{0}) := w_0$ if $g(v) = w$, \label{act_i}
    \item[ii)]
    for any $\rho \in T$, define
\begin{equation}
\label{action}
\tilde{g}(v_{\rho}) := w_{ \varepsilon \rho }
\quad \text{if} 
\quad
\text{
$\sigma^{(\alpha)}((g,t), v) =  \varepsilon$
and $w=g(v)$,} 
\end{equation}
where we identify $G/K$ with the vertices of $X$.
\end{enumerate}

By Corollary \ref{cor::corollary_twisted_action}, the action of $\widetilde{G}$ on $\widetilde{X}$ defined in (\ref{action}) is well defined and defines a genuine group action. Moreover, $\widetilde{X}$ may be viewed as $X \times_{\alpha,\theta} T$, which is used in Corollary \ref{cor::corollary_twisted_action}, although only at the level of vertices.

\begin{lemma} 
\label{lem::action_tilde_G}
In the action defined in (\ref{action}), $\widetilde{G}$ acts on the connected tree $\widetilde{X}$ by automorphisms.
\end{lemma}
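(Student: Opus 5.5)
We have to show that each $\tilde g=(g,t)\in\widetilde{G}$ induces a bijection of $V(\widetilde{X})$ that preserves the adjacency relation $\e$ in both directions. Since the previous paragraph together with Corollary \ref{cor::corollary_twisted_action} already shows that (\ref{action}) defines a group action, each map $v\mapsto\tilde g(v)$ is a bijection, with inverse given by $\tilde g^{-1}$. It therefore remains to check that edges are sent to edges. Because the inverse is again an element of $\widetilde{G}$, it then follows that non-edges are sent to non-edges as well. We should also note explicitly that the action respects the two types of vertices. By i), level-$0$ vertices go to level-$0$ vertices. By ii), for $\rho\in T$ the vertex $v_\rho$ goes to $w_{\varepsilon\rho}$ with $w=g(v)$. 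This presumes the indexing convention in which $v_0$ is the vertex labelled by the identity element $1\in T$; then ii) with $\rho=1$ gives $w_{\varepsilon}$, which must be made compatible with i). I would first make this convention precise. Either treat the level-$0$ vertices as a separate copy of $V(X)$ distinct from the $v_\rho$, so that there are $|T|$ leaves attached to each $v_0$, or read ii) only for the leaf vertices. With that convention the action preserves the bipartition into ``spine'' vertices $\{v_0\}$ and ``leaf'' vertices $\{v_\rho\}$.

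We then check the two edge types. Spine edges: if $v_0\e w_0$, then $v\e w$ in $X$, and since $G$ acts on $X$ by automorphisms, $g(v)\e g(w)$, hence $\tilde g(v_0)=g(v)_0\e g(w)_0=\tilde g(w_0)$. Leaf edges: the edge $v_\rho\e v_0$ is sent to the pair $w_{\varepsilon\rho}$ and $w_0$, where $w=g(v)$ and $\varepsilon=\sigma^{(\alpha)}((g,t),v)$. Both vertices lie over the same vertex $w$ of $X$, so they are joined by an edge by construction of $\widetilde{X}$. The key point is that both endpoints project to $g(v)$, which is the commutative diagram of Corollary \ref{cor::corollary_twisted_action}. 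Thus every edge goes to an edge, and applying the same argument to $\tilde g^{-1}$ gives the converse. Hence each $\tilde g$ is a graph automorphism, and it preserves the unit edge lengths and therefore the metric $d$. The homomorphism property $\tilde g_1(\tilde g_2 v)=(\tilde g_1\tilde g_2)v$ on leaves is exactly the cocycle identity of Theorem \ref{twisted}, and on the spine it is simply the $G$-action on $X$.

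The main obstacle is bookkeeping rather than substance: reconciling the notation $v_0$ with $v_\rho$ for $\rho$ the identity of $T$. If these are literally the same vertex, then $\tilde g(v_0)$ would be both $w_0$ and $w_{\varepsilon}$, and $\sigma^{(\alpha)}$ need not be trivial. I would resolve this first by fixing the disjoint-copy interpretation, which is also the one under which $\widetilde{X}$ is a tree with $|T|$ pendant leaves per vertex, and then run the edge-preservation argument above.
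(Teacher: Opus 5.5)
Your proof is correct. It differs from the paper's only in how the automorphism property is certified.

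The paper first checks that valences are preserved. It then verifies that $\tilde g$ is an isometry of the path metric $d$, computing
$d(\tilde g(v_\rho),\tilde g(w_\zeta))=1+d(\tilde g(v_0),\tilde g(w_0))+1$
by routing through the spine, and it leaves the mixed case $d(\tilde g(v_\rho),\tilde g(w_0))$ as ``similar''. You instead check directly that both edge types are sent to edges. You then get the converse, and bijectivity, by applying the same argument to $\tilde g^{-1}$.

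Both arguments rest on the same two facts. First, $G$ acts on $X$ by automorphisms. Second, the leaves over $v$ are carried to leaves over $g(v)$, which is the commutative diagram of Corollary \ref{cor::corollary_twisted_action}. Your local check is more economical. It makes the valence count unnecessary, it makes bijectivity explicit rather than implicit, and it covers all pairs of vertices at once instead of relying on a distance formula that tacitly presumes the two vertices are distinct. The paper's route has the advantage of recording the isometry statement directly, which is the form used later (e.g.\ for $\partial\widetilde{X}=\partial X$). As you note, isometry follows at once from adjacency preservation.

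Your remark on the labelling is apt. The edges $v_\rho\e v_0$ for all $\rho\in T$, and the paper's valence count $\deg_X(g(v))+|T|$, only make sense if $v_0$ is a separate spine vertex rather than $v_{1_T}$. That is exactly the convention you adopt.

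One small refinement: edge preservation would hold with any $T$-valued function in place of $\sigma^{(\alpha)}$. The cocycle identity of Theorem \ref{twisted} is needed only to make these maps compose into a group action, and you correctly place it there.
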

\begin{proof} We verify that the action of $\widetilde{G}$ preserves the tree structure of $\widetilde{X}$. Let $v \in V(X)$ and $\tilde{g} \in \widetilde{G}$.

First, the $\widetilde{G}$-action preserves the valence of each vertex of $\widetilde{X}$. Indeed, by the definition of the action given in \eqref{action}, the valence of the vertex $\tilde{g}(v_0)$ is equal to the valence of $g(v)$ plus the cardinality of $T$. Moreover, for any $\rho \in T$, the valence of $v_\rho$ is one. By the same definition \eqref{action}, together with Corollary \ref{cor::corollary_twisted_action}, the valence of $\tilde{g}(v_\rho)$ remains one.

Next, we will verify that $\tilde{G}$ acts by isometries 
with respect to the metric $d$ on the tree $\widetilde{X}$.
For every  
$v, w \in V(X)$ and $\tilde{g}\in \tilde{G}$,
the fact that $G$ acts on $X$ by isometries implies that
\begin{equation} \label{v0}
d(\tilde{g}v_0, \tilde{g}w_0) = d(v_0,w_0)    
\end{equation}
 in $\tilde{X}$.
Noting that 
$\tilde{g}(v_0) = (g(v))_0$ and that 
\[
d(x_{\rho},x_0)=1\]
for all $ \rho \in T$ and $x\in V(X)$, it follows that
\begin{align*}
    d(\tilde{g}(v_{\rho}), \tilde{g}(w_{\zeta})) &=
d(\tilde{g}(v_{\rho}), \tilde{g}(v_0)) +d(\tilde{g}(v_0),\tilde{g}(w_0))+ d(\tilde{g}(w_0), \tilde{g}(w_{\zeta}))\\
&= d\Big( (gv)_{\sigma^{(\alpha)}(\tilde{g}, v)\rho},
    (gv)_0 \Big) 
    + d\Big( \tilde{g}(v_0), \tilde{g}(w_0) \Big) 
    + d\Big( (gw)_0, (gw)_{\sigma^{(\alpha)}(\tilde{g}, w)\zeta} \Big) &\\
    &= 1 + d\Big( \tilde{g}(v_0), \tilde{g}(w_0) \Big)  + 1 \\
    &= d(v_{\rho}, v_0) + d(v_0,w_0) + d(w_0, w_{\zeta})&\\
    &= d(v_{\rho}, w_{\zeta})
\end{align*}
for all $\rho, \zeta \in T$.
The verification that $d(\tilde{g}(v_\rho), \tilde{g}(w_0)) = d(v_\rho, w_0)$ is similar, and the lemma is proved. 
\end{proof}

%\begin{lemma} $\widetilde{G}$ acts transitively on the vertices of $\widetilde{X}$.
%\end{lemma}
%\begin{proof}
%Let $x, y\in V(\widetilde{X})$. There are  vertices $v_0,w_0 \in V(X)$ such that $x \in \{v_{0}, v_{\rho} \;\vert \; \rho \in T \}$ and $y \in \{w_{0}, w_{\rho} \;\vert \; \rho \in T \}$. 
%Since $G$ acts transitively on $V(X)$, choose $g\in G$ such that $gv_0=w_0$.
%Then, it suffices to notice that if $y = w_{\delta}$ for some $\delta\in T$ and if $\varepsilon \in T$ is such that $\sigma^{(\alpha)}((g,t), v) =  \varepsilon$, then 
%by Corollary \ref{cor::corollary_twisted_action} we must  have $(g,t) (v_{\varepsilon^{-1}\delta}) = w_{\delta}$.
%\end{proof}

Given an apartment $\mathscr{A}$ in $X$, we define the corresponding apartment $\widetilde{\mathscr{A}}$ in $\widetilde{X}$ to be the subgraph whose vertices are
\[
\{\, v_0, v_\rho \mid \rho \in T \,\},
\]
as $v_0=v$ ranges over the vertices of $\mathscr{A}$.

\section{Gelfand pairs}
\label{Gelfand_pairs}
Let $G$ be a locally compact group and let $K$ be a compact subgroup of $G$.
The spherical Hecke algebra
$\mathscr{H}(G, K)$ is the space of continuous, compactly supported $K$-biinvariant functions $C_{c}(K\backslash G /K)$
with convolution 
\[
\psi * \phi (x) = \int_G \psi(xg)\phi(g^{-1}) dg, \quad \psi, \phi \in \mathscr{H}(G,K).
\]
The pair $(G,K)$ is a Gelfand pair if the Hecke algebra
$\mathscr{H}(G,K)$ is commutative.

Gelfand pairs are of relevance in representation theory due to the fact that $(G,K)$ is a Gelfand pair if and only if all irreducible unitary representations $\pi \in \widehat{G}$ satisfy \[
\dim \pi^K \leq 1,\] 
where $\pi^K$ is the subspace of $K$-fixed vectors. 

\medskip
Given a connected, locally finite infinite tree $X$ and a fixed vertex $v_0$ of $X$, a \emph{geodesic ray} in $X$ with base point $v_0$ is, by definition, an infinite geodesic in the tree $X$ that starts at $v_0$. The equivalence class (or ``endpoint at infinity'') of such a geodesic ray is called an \emph{end} of the tree $X$. The set of all ends determined by geodesic rays in $X$ with base point $v_0$ is denoted by $\partial X$ and is called the \emph{(visual) boundary} of the tree $X$. It is easy to see that $\partial X$
does not depend on the chosen base point $v_0$.

\begin{remark}
\label{rem::visual_boundary}
Under \textbf{Assumption \eqref{Assumption1}}, and keeping the notation introduced there and at the end of Section \ref{sec::connected_tree}, note that the visual boundary $\partial \widetilde{X}$ of $\widetilde{X}$ coincides with the visual boundary $\partial X$ of $X$.
\end{remark}

%Suppose the locally compact group $G$ acts on a locally finite building $X$. Let $\partial X$ be the virtual boundary of $X$ equipped with the cone topology. 

 By a more general theorem that holds in the context of affine buildings (see \cite[Theorem 1.1]{CaCi15}), one can extract the following lemma in the setting of locally finite trees. 
 
\begin{proposition}[See Propositions 2.4 and 2.6 in \cite{Ci16}]
\label{prop::Gelfand_pair}
    Let $d \in \mathbb{N}$ with $d > 2$, and let $\mathscr{T}_d$ be a $d$-regular tree. Let $G$ be a closed, non-compact, locally compact group such that $G \leq \operatorname{Aut}(\mathscr{T}_d)$ acts cocompactly and type-preservingly on $\mathscr{T}_d$. Let $x_0$ be a vertex of $\mathscr{T}_d$, and let $K := \operatorname{Stab}_G(x_0)$ be the stabilizer of $x_0$ in $G$.

Then $(G,K)$ is a Gelfand pair if and only if $K$ acts transitively on $\partial \mathscr{T}_d$; that is, for any two distinct ends $\xi_1, \xi_2$, there exists $k \in K$ such that $k(\xi_1) = \xi_2$.
\end{proposition}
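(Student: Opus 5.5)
The plan is to pass from the boundary $\partial\mathscr{T}_d$ to the spheres $S(x_0,n)$ around the base vertex. Two preliminary observations make this possible.

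First, $K$ acts transitively on $\partial\mathscr{T}_d$ if and only if $K$ acts transitively on $S(x_0,n)$ for every $n\ge 1$.
\begin{itemize}
\item[$(\Rightarrow)$] Every vertex of $S(x_0,n)$ is the $n$-th vertex of some geodesic ray issued from $x_0$. A $k\in K$ moving one end to another moves the corresponding truncations.
\item[$(\Leftarrow)$] Given ends $\xi_1,\xi_2$, let $r_1,r_2$ be the rays from $x_0$ to them. The sets $\{k\in K : k\,r_1(n)=r_2(n)\}$ are nonempty, closed and decreasing in $n$, so by compactness of $K$ their intersection is nonempty; any element of it maps $\xi_1$ to $\xi_2$.
\end{itemize}

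Second, since $G$ is non-compact, closed and cocompact, it contains a hyperbolic element. Because the action is type-preserving, $G x_0$ is contained in the set of vertices at even distance from $x_0$. Hence $K\backslash G/K$ is in bijection with the $K$-orbits on $Gx_0$, via $KgK\mapsto K\cdot gx_0$.

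\emph{Transitivity implies Gelfand.} Assume $K$ is transitive on every sphere $S(x_0,n)$. Then $KgK$ is determined by the single integer $d(x_0,gx_0)$. Since $d(x_0,g^{-1}x_0)=d(gx_0,x_0)$, we get $Kg^{-1}K=KgK$ for all $g$. Next I check unimodularity. For a group acting on a tree with compact open vertex stabilizers one has
\[
\Delta(g)=\frac{|K:K\cap gKg^{-1}|}{|gKg^{-1}:K\cap gKg^{-1}|}.
\]
Both indices equal $|S(x_0,n)\cap Gx_0|$ with $n=d(x_0,gx_0)$: one counts the $K$-orbit of $gx_0$, the other the $gKg^{-1}$-orbit of $x_0$, and both orbits are full spheres by transitivity and regularity. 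So $\Delta\equiv 1$. By unimodularity, $f\mapsto f^\vee$ with $f^\vee(g)=f(g^{-1})$ is an anti-automorphism of $\mathscr{H}(G,K)$. It is the identity, because every $K$-biinvariant function is constant on double cosets and $Kg^{-1}K=KgK$. Gelfand's trick then gives $\psi*\phi=(\psi*\phi)^\vee=\phi^\vee*\psi^\vee=\phi*\psi$.

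\emph{Gelfand implies transitivity.} This is the direction I expect to be the main obstacle, and I would prove it by contraposition. Suppose $n$ is minimal such that $K$ has at least two orbits on $S(x_0,n)\cap Gx_0$. Cocompactness plus the hyperbolic element gives that $Gx_0$ meets every even sphere, so this is the relevant case. Fix a hyperbolic $a\in G$ of translation length $2$ whose axis passes through $x_0$.

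The strategy is to compare $1_{KaK}*1_{KgK}$ and $1_{KgK}*1_{KaK}$, where $g$ is chosen with $gx_0$ in one of the distinct $K$-orbits, on a suitable double coset. Evaluated at $h$, the first convolution counts neighbours $y$ at distance $2$ from $x_0$ in $Gx_0$ such that $d(y,hx_0)$ lies in the $K$-orbit pattern of $g$, up to the normalisation by Haar measure. The second counts the analogous configuration with the roles reversed. By minimality of $n$, the $K$-orbits on $S(x_0,n-2)$ are single. The two distinct orbits on $S(x_0,n)$ then have to be distinguished by the branching at the last two steps, that is, by how the stabiliser of the intermediate vertex acts on its children. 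Using this, I would aim to produce an $h$ at which the two counts differ, so $\mathscr{H}(G,K)$ is non-commutative.

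If this direct count fails, the fallback is the known route from \cite{CaCi15}. There one shows that commutativity of $\mathscr{H}(G,K)$ forces $Kg^{-1}K=KgK$ together with $K$-orbit counts on spheres being symmetric, and then uses the regularity of $\mathscr{T}_d$ to force a single orbit per sphere.
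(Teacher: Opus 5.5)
Your ``if'' direction is correct and matches what the paper relies on. The paper does not prove this proposition; it cites \cite{Ci16} and \cite{CaCi15}, but it isolates Lemma~\ref{doublecoset} as the key input, and transitivity of $K$ on spheres gives $Kg^{-1}K=KgK$ exactly as you argue. Your unimodularity check is correct but unnecessary. The map $f\mapsto f^\vee$ already reverses convolution on $C_c(G)$ for any left Haar measure (substitute $g=xh$), which is what the paper's direct computation uses.

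The ``only if'' direction, however, has a genuine gap.
\begin{itemize}
\item \emph{The decisive step is only announced.} You never produce an $h$ with $(1_{KaK}*1_{KgK})(h)\neq(1_{KgK}*1_{KaK})(h)$.
\item \emph{The set-up fails in general.} Cocompactness gives neither a hyperbolic $a$ of translation length $2$ with axis through $x_0$, nor that $Gx_0$ meets every even sphere. Colour the vertices of $\mathscr{T}_d$ by $A,B,C$ so that $A$- and $C$-vertices have only $B$-neighbours and every $B$-vertex has exactly one $A$-neighbour. The colour-preserving automorphisms form a closed, non-compact, cocompact, type-preserving group, yet for an $A$-vertex $x_0$ the sphere $S(x_0,2)$ consists only of $C$-vertices.
\item \emph{Minimality of $n$ does not help.} The vertex at distance $n-2$ on $[x_0,y]$ need not lie in $Gx_0$, so nothing reduces matters to ``branching at the last two steps''.
\item \emph{The contrapositive misses a case.} It only covers the situation where some $S(x_0,n)\cap Gx_0$ carries two $K$-orbits. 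A single orbit on each $S(x_0,n)\cap Gx_0$ is not yet transitivity on $\partial\mathscr{T}_d$, since these sets need not be full spheres. In that residual case your own ``if'' argument shows $(G,K)$ \emph{is} Gelfand, so it must be excluded separately. This can be done by induction on $k$: the $K$-orbit of the $k$-th vertex of $[x_0,y]$, $y\in Gx_0$, would depend only on $d(x_0,y)$; cocompactness makes every orbit on $S(x_0,k)$ occur; and geodesic concatenation $y=gx_0\mapsto gy'$ would realise two values at one distance.
\item \emph{The fallback does not close the gap.} It is only a citation, and its stated mechanism is false in general: commutativity of $\mathscr{H}(G,K)$ does not force $Kg^{-1}K=KgK$ (take $G$ discrete abelian and $K=\{e\}$).
\end{itemize}

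What is missing is an argument that extracts geometry from commutativity. The usable form is equality of supports, $Kg_1Kg_2K=Kg_2Kg_1K$. Suppose $g_1x_0$ and $g_2x_0$ lie in distinct $K$-orbits at the same distance $n$, and some element of $Kg_1Kg_2K$ moves $x_0$ to distance $2n$. The midpoint of that geodesic then lies in $Kg_1x_0$, whereas for $Kg_2Kg_1K$ it would lie in $Kg_2x_0$, a contradiction. Such a concatenation always exists when $K$ fixes no neighbour of $x_0$, because every nonempty $K$-invariant set of neighbours then has at least two points. What remains is to exclude a $K$-fixed neighbour, again by comparing supports and using cocompactness and $d\geq 3$. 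None of this appears in your proposal.
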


At the heart of the proof of Proposition \ref{prop::Gelfand_pair} lies the following lemma, which provides a necessary condition for $(G,K)$ to be a Gelfand pair.

\begin{lemma} \label{doublecoset}
 Let $G$ be a locally compact group and $K$ be a compact subgroup of $G$. 

If $g^{-1} \in K g K$ for all $g \in G$, then $(G,K)$ is a Gelfand pair.
\end{lemma}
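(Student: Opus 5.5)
This is Gelfand's trick. I would build an involutive anti-automorphism of the Hecke algebra from the inversion map $\iota(g)=g^{-1}$, show it fixes every element, and conclude commutativity. To make the argument work for all $\psi,\phi$, I will assume (or first reduce to the case) that $G$ is unimodular. This is automatic in the intended applications, since groups acting properly and vertex-transitively on trees with compact open stabilizers are unimodular when the stabilizers are conjugate.

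\emph{Step 1: every element of $\mathscr{H}(G,K)$ is invariant under inversion.} For $\phi\in\mathscr{H}(G,K)$ set $\phi^\vee(g):=\phi(g^{-1})$. The hypothesis gives $g^{-1}=k_1gk_2$ with $k_1,k_2\in K$. By bi-$K$-invariance,
\[
\phi^\vee(g)=\phi(k_1gk_2)=\phi(g).
\]
So $\phi^\vee=\phi$ for every $\phi$.

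\emph{Step 2: inversion reverses convolution.} I will show that
\[
(\psi*\phi)^\vee=\phi^\vee*\psi^\vee .
\]
With the convention $\psi*\phi(x)=\int_G\psi(xg)\phi(g^{-1})\,dg$, we have
\[
(\psi*\phi)^\vee(x)=\int_G\psi(x^{-1}g)\phi(g^{-1})\,dg .
\]
Substituting $g\mapsto xg^{-1}$ uses left invariance of Haar measure together with invariance under inversion, which holds by unimodularity. The integral becomes
\[
\int_G\psi(g^{-1})\phi(gx^{-1})\,dg=\int_G\phi^\vee(xg^{-1})\psi^\vee(g)\,dg .
\]
A further substitution $g\mapsto g^{-1}$ turns this into $\int_G\phi^\vee(xg)\psi^\vee(g^{-1})\,dg=\phi^\vee*\psi^\vee(x)$. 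This change-of-variables bookkeeping is the step to check carefully; it is where unimodularity is genuinely used.

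\emph{Step 3: conclude.} Combining the two steps,
\[
\psi*\phi=(\psi*\phi)^\vee=\phi^\vee*\psi^\vee=\phi*\psi ,
\]
so $\mathscr{H}(G,K)$ is commutative and $(G,K)$ is a Gelfand pair.

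The main obstacle is unimodularity, since a general locally compact $G$ need not be unimodular. I would derive it from the hypothesis itself. The modular function $\Delta$ is trivial on the compact group $K$, because $\Delta(K)$ is a compact subgroup of $\mathbb{R}_{>0}$. Then $g^{-1}\in KgK$ gives
\[
\Delta(g)^{-1}=\Delta(g^{-1})=\Delta(k_1)\Delta(g)\Delta(k_2)=\Delta(g),
\]
so $\Delta(g)^2=1$ and hence $\Delta(g)=1$. This removes the extra assumption and makes the argument self-contained.
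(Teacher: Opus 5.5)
Your proof is correct. It uses the same Gelfand trick as the paper, but the bookkeeping differs, and that difference is what brings in unimodularity.

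\textbf{The paper's route.} The paper never forms the anti-automorphism. It uses $\psi(g^{-1})=\psi(g)$ to write $(\phi*\psi)(x)=\int_G\phi(xg)\psi(g)\,dg$. A single left translation $h=xg$ turns this into $\int_G\psi(x^{-1}h)\phi(h)\,dh=(\psi*\phi)(x^{-1})$, using $\phi(h^{-1})=\phi(h)$. It finishes with inversion-invariance of $\psi*\phi\in\mathscr{H}(G,K)$. Only left invariance of Haar measure is used, so the modular function never appears.

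\textbf{Your route.} You prove the general identity $(\psi*\phi)^\vee=\phi^\vee*\psi^\vee$. Your substitutions $g\mapsto xg^{-1}$ and $g\mapsto g^{-1}$ need inversion-invariance of Haar measure, so you need unimodularity. You derive it correctly from the hypothesis: $\Delta|_K=1$ and $\Delta(g)^{-1}=\Delta(g)$ force $\Delta(g)=1$. That is a nice observation, but it is not forced by the argument. With the paper's convention, the single substitution $g=xu$ already gives $(\psi*\phi)^\vee(x)=\int_G\psi(u)\phi(u^{-1}x^{-1})\,du=(\phi^\vee*\psi^\vee)(x)$ for every locally compact group. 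So your remark that Step 2 is ``where unimodularity is genuinely used'' comes from the substitutions you chose, not from the mathematics.

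\textbf{Comparison.} Your version gives a more structural statement: inversion is an involutive anti-automorphism of $\mathscr{H}(G,K)$ fixing every element. It also yields the fact that the hypothesis forces $G$ to be unimodular. The paper's version is shorter and needs nothing beyond left invariance.

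\textbf{A false side remark.} You should drop the parenthetical claim that groups acting properly and vertex-transitively on trees with conjugate compact open stabilizers are automatically unimodular. The stabilizer of an end in $\operatorname{Aut}$ of a regular tree of degree at least $3$ is vertex-transitive with such stabilizers, but it is not unimodular. Your final argument does not rely on that claim, so no harm is done.
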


\begin{proof}
For every $\phi \in \mathscr{H}(G,K)$ and $x\in G$ 
note that if $x^{-1} \in KxK$ 
then $\phi(x) = \phi(x^{-1})$. 
As a consequence,
for all $\phi, \psi \in \mathscr{H}(G,K)$ and $x\in G$ we have
\begin{align*}
    (\phi\ast \psi)(x) &= \int_G \phi(xg) \psi(g)dg\\
    &= \int_G  \psi(x^{-1} h)\phi(h)  dh\\
    &= (\psi \ast \phi)(x^{-1})\\
    &=(\psi \ast \phi)(x).
\end{align*}

\end{proof}

For the rest of this section, we work under \textbf{Assumption \eqref{Assumption1}}, keeping the notation introduced there. 

\begin{assumption} \label{Assumption2}
Fix a vertex $x \in X$ and set $K := \operatorname{Stab}_G(x)$, the pointwise stabilizer of $x$ in $G$. Let $\ell$ be a bi-infinite geodesic line in $X$ such that $x$ lies on $\ell$. Suppose that there exists an element $a \in G$ such that $a(\ell) = \ell$ setwise and $a$ acts as a translation along $\ell$ of translation length $1$; that is,
$d_X(y, a(y)) = 1$, for every vertex $y \in \ell$. Define $A := \langle a^n \mid n \in \mathbb{Z} \rangle$,
which is an infinite abelian subgroup of $G$.    
\end{assumption}

\begin{lemma}
\label{lem::tilde_KAK}
Under \textbf{Assumptions \eqref{Assumption1}} and
\textbf{\eqref{Assumption2}}, and keeping the notation introduced there, suppose that $G$ admits the polar decomposition $G = K A K$. Then
\[
\widetilde{G} = \widetilde{K} \, \widetilde{A} \, \widetilde{K},
\]
where $\widetilde{A} = A \times_{\alpha} T$ and $\widetilde{K} = K \times_{\alpha} T$ denote the preimages of $A$ and $K$, respectively, in $\widetilde{G}$.
\end{lemma}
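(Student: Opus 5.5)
Let $p:\widetilde{G}\to G$, $(g,t)\mapsto g$, denote the canonical projection; it is a surjective homomorphism by \eqref{equ:group_structure}. Its kernel is $\{1\}\times T$, which is central. Indeed, by normalization of $\alpha$,
\[
(1,s)(g,t)=(g,\alpha(1,g)st)=(g,st)=(g,\alpha(g,1)ts)=(g,t)(1,s).
\]
Then $\widetilde{K}=p^{-1}(K)=K\times T$ and $\widetilde{A}=p^{-1}(A)=A\times T$ are subgroups of $\widetilde{G}$, being preimages of subgroups. The plan is simply to lift the polar decomposition $G=KAK$ through $p$ and absorb the ambiguity in the central factor $T$ into one of the factors.

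\textbf{Step 1.} The inclusion $\widetilde{K}\widetilde{A}\widetilde{K}\subseteq\widetilde{G}$ is trivial. For the reverse inclusion, let $\tilde{g}=(g,t)\in\widetilde{G}$ and write $g=k_1ak_2$ with $k_1,k_2\in K$ and $a\in A$, using $G=KAK$.

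\textbf{Step 2.} Consider the element $\tilde{h}:=(k_1,1)(a,1)(k_2,1)$. Since $p$ is a homomorphism, $p(\tilde{h})=k_1ak_2=g$. Concretely, $\tilde{h}=(g,\alpha(k_1,a)\alpha(k_1a,k_2))$, but the explicit value of this $T$-coordinate is irrelevant. Hence $\tilde{h}^{-1}\tilde{g}\in\ker p=\{1\}\times T$, so $\tilde{g}=\tilde{h}(1,s)$ for some $s\in T$.

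\textbf{Step 3.} Because $(1,s)$ is central and lies in $\widetilde{K}$,
\[
\tilde{g}=(k_1,1)\,(a,1)\,\bigl((k_2,1)(1,s)\bigr)\in\widetilde{K}\,\widetilde{A}\,\widetilde{K}.
\]
Alternatively, $(1,s)$ could be absorbed into the $\widetilde{A}$ factor. This proves $\widetilde{G}=\widetilde{K}\widetilde{A}\widetilde{K}$.

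There is no real obstacle here. The only points to check carefully are that $\{1\}\times T$ is central, which uses the normalization $\alpha(g,1)=\alpha(1,g)=1$, and that $\widetilde{K}$ and $\widetilde{A}$ really are the full preimages, so that each contains $\{1\}\times T$. Neither the triviality of $\alpha$ on $K\times K$ nor the tree structure is needed for this lemma. Those hypotheses will matter later, when the decomposition is combined with Lemma~\ref{doublecoset} to obtain the Gelfand property.
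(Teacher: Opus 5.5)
Your proof is correct, but it takes a different route from the paper's. The paper argues geometrically. It uses the action of $\widetilde{G}$ on $\widetilde{X}$ from Lemma~\ref{lem::action_tilde_G} and notes that $\widetilde{K}=\operatorname{Stab}_{\widetilde{G}}(x_0)$. Then, writing $g=k_1\gamma k_2$ and choosing arbitrary lifts $\tilde{k}_1=(k_1,t_1)$ and $\tilde{\gamma}=(\gamma,\xi)$, it checks that $\tilde{\gamma}^{-1}\tilde{k}_1^{-1}\tilde{g}$ fixes $x_0$, so this element lies in $\widetilde{K}$.

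You argue purely algebraically: $p$ is a surjective homomorphism whose kernel $\{1\}\times T$ lies in the full preimage $\widetilde{K}$, hence $p^{-1}(KAK)=\widetilde{K}\widetilde{A}\widetilde{K}$. The two arguments rest on the same mechanism. Since $\tilde{g}(v_0)=(gv)_0$, the action on the level-$0$ vertices factors through $p$, so $\operatorname{Stab}_{\widetilde{G}}(x_0)=p^{-1}(K)$, which is exactly your kernel absorption. Your version isolates this cleanly and shows more. The lemma holds for any extension and any subgroups $K,A$ with $G=KAK$. It needs neither the tree, nor the well-definedness of the twisted action (Corollary~\ref{cor::corollary_twisted_action}), nor $\alpha|_{K\times K}=1$.

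What the paper's version buys is the identification of $\widetilde{K}$ as a vertex stabilizer. It reuses this in the subsequent Gelfand-pair proposition, where transitivity of $K$ on $\partial X$ is applied through the action on $\widetilde{X}$.

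A minor point: in your Step 3, centrality of $(1,s)$ is not actually used. It suffices that $(1,s)\in\widetilde{K}$ and that $\widetilde{K}$ is a subgroup.
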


\begin{proof}
By Lemma \ref{lem::action_tilde_G}, we know that $\widetilde{G}$ acts on $\widetilde{X}$ by automorphisms. Moreover, by \textbf{Assumption \eqref{Assumption1}}, the group $K$ splits in $\widetilde{G}$, and hence $\widetilde{K} = K \times T$ as a direct product. In addition, by the definition of the action of $\widetilde{G}$ on $\widetilde{X}$, it is easy to see that
\[
\widetilde{K} = \operatorname{Stab}_{\widetilde{G}}(x_0),
\]
where $x_0$ denotes the lift of the vertex $x$ to $\widetilde{X}$.

Let $\tilde{g} = (g,t) \in \widetilde{G}$. If $\tilde{g}(x_0) = x_0$, then by the above, $\tilde{g} \in \widetilde{K}$.

Suppose now that $\tilde{g}(x_0) = y_0$, where $y \neq x$ is a vertex of $X$. Then we know that $g = k_1 \gamma k_2$, with $\gamma \in A$ and $k_1, k_2 \in K$. Choose any $\tilde{\gamma} = (\gamma, \xi) \in \widetilde{A}$ and $\tilde{k}_1 = (k_1, t_1) \in \widetilde{K}$. Then
\[
\tilde{\gamma}^{-1}\tilde{k}_1^{-1}\tilde{g}(x_0) = x_0,
\]
which implies that
\[
\tilde{\gamma}^{-1}\tilde{k}_1^{-1}\tilde{g} \in \widetilde{K}.
\]
Consequently,
\[
\tilde{g} \in \widetilde{K}\tilde{\gamma}\widetilde{K} \subset \widetilde{K}\widetilde{A}\widetilde{K}.
\]
\end{proof}

\begin{remark}
One can observe that $G = \mathrm{GL}_2(F)$, where $F$ is a non-Archimedean local field, and $K = \mathrm{GL}_2(\mathcal{O}_F)$ satisfy the hypotheses of Lemma \ref{lem::tilde_KAK}. In this case, one may take $A$ to be the diagonal subgroup of $\mathrm{GL}_2(F)$.
\end{remark}

Let $F$ be a non-Archimedean local field. Let $G$ be the group of $F$-points of a split reductive algebraic group $\mathscr{G}$ defined over $F$, and let $K = \mathscr{G}(\mathcal{O}_F)$. In \cite[Theorem 9.1]{mcnamara}, McNamara shows that $(\widetilde{G}, K)$ is a Gelfand pair (see also his definition from \cite[Section 9]{mcnamara} of the Hecke algebra). Moreover, in \cite[Section 3, Theorem 3.3(ii)]{Kazhdan-Patterson}, in the case of $G= \mathrm{GL}_n(F)$, Kazhdan and Patterson prove that $(\widetilde{G}, \widetilde{K})$ is a Gelfand pair by lifting the map $g \mapsto (g^{-1})^t$ to $\widetilde{G}$.

Here, we provide an alternative geometric proof for locally compact groups $G$ whose coverings $\widetilde{G}$ act on a tree $\widetilde{X}$ in a suitable way.

\begin{proposition} 
Under 
\textbf{Assumptions \eqref{Assumption1}} and \textbf{\eqref{Assumption2}}, and keeping the notation introduced there, suppose that $G$ admits the polar decomposition $G = K A K$ and $K$ acts transitively on $\partial X$. Then 
$(\widetilde{G}, \widetilde{K})$ is a Gelfand pair. 
\end{proposition}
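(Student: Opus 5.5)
We plan to apply Lemma~\ref{doublecoset} to the pair $(\widetilde G,\widetilde K)$. We must show that $\tilde g^{-1}\in \widetilde K\tilde g\widetilde K$ for every $\tilde g\in\widetilde G$. By Lemma~\ref{lem::tilde_KAK} we have $\widetilde G=\widetilde K\widetilde A\widetilde K$, and the set of $\tilde g$ satisfying this condition is stable under multiplication on either side by $\widetilde K$. Indeed, if $\tilde g^{-1}=\tilde k_1\tilde g\tilde k_2$, then for $\tilde h=\tilde k\tilde g\tilde k'$ we get $\tilde h^{-1}=\tilde k'^{-1}\tilde k_1\tilde k^{-1}\,\tilde h\,\tilde k'^{-1}\tilde k_2\tilde k^{-1}$. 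It therefore suffices to treat $\tilde\gamma=(a^n,\xi)\in\widetilde A$.

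We also observe that $\widetilde K=K\times_\alpha T$ contains the central subgroup $\{1\}\times T$, which is central in $\widetilde G$. It therefore suffices to find $k_1,k_2\in K$ with $k_1a^nk_2=a^{-n}$ in $G$. Lifting to $\tilde k_i=(k_i,1)$, the product $\tilde k_1\tilde\gamma\tilde k_2$ projects to $a^{-n}$, so it differs from $\tilde\gamma^{-1}$ (which also projects to $a^{-n}$) only by an element $(1,s)$ with $s\in T$. Absorbing this central element into $\tilde k_2$ gives $\tilde\gamma^{-1}\in\widetilde K\tilde\gamma\widetilde K$. The whole problem thus reduces to the statement in $G$: $a^{-n}\in Ka^nK$ for all $n$.

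For this geometric step we use the tree. Let $\ell$ have ends $\xi_+$ and $\xi_-$, with $a$ translating toward $\xi_+$. By assumption $K$ acts transitively on $\partial X$, and the plan is to produce $k\in K$ that reverses $\ell$ through $x$, i.e.\ $k(\xi_+)=\xi_-$ and $k(\xi_-)=\xi_+$. Given such $k$, the element $ka^nk^{-1}$ stabilizes $\ell$, fixes no end of it, and translates $\ell$ by $n$ toward $\xi_-$. Hence $ka^nk^{-1}(x)=a^{-n}(x)$, so $a^{n}ka^nk^{-1}\in K$, and therefore $a^{-n}\in Ka^nK$.

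The main obstacle is obtaining a $k$ that swaps both ends simultaneously, since transitivity on $\partial X$ only moves one end at a time. We would argue as follows. Choose $k\in K$ with $k(\xi_+)=\xi_-$. Then $k(\ell)$ is a line through $x$ containing the ray from $x$ to $\xi_-$; its other half is a ray from $x$ toward some end $\eta$. Rather than requiring $k(\ell)=\ell$, we use the weaker property we actually need: $d(x,ka^nk^{-1}x)=n$ and, more to the point, $a^{-n}(x)$ lies on the ray $[x,\xi_-)$ at distance $n$. Since $ka^nk^{-1}$ translates along $k(\ell)$ toward $k(\xi_+)=\xi_-$ by $n$, and $[x,\xi_-)\subset k(\ell)$, we again get $ka^nk^{-1}(x)=a^{-n}(x)$. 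This covers $n>0$; the case $n<0$ follows by symmetry (or by inverting), and $n=0$ is trivial. So only one-end transitivity is needed, and this resolves the obstacle. We should also double-check that $a^n$ acts on $\ell$ by translation of length $|n|$, which holds by Assumption~\ref{Assumption2}. Finally, Lemma~\ref{doublecoset} yields that $(\widetilde G,\widetilde K)$ is a Gelfand pair.
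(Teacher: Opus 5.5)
Your proposal is correct and follows essentially the paper's route. You apply Lemma~\ref{doublecoset}, reduce to $\widetilde A$ via $\widetilde G=\widetilde K\widetilde A\widetilde K$, and then use a $k\in K$ carrying the ray $[x,\xi_+)$ onto $[x,\xi_-)$, so that $ka^n(x)=a^{-n}(x)$; the paper makes the mirror choice $ka^{-n}(x)=a^n(x)$ and asserts it directly from boundary transitivity. The differences are cosmetic: you lift from $G$ to $\widetilde G$ by absorbing the central $T\subset\widetilde K$ instead of using $\widetilde K=\operatorname{Stab}_{\widetilde G}(x_0)$ in $\widetilde X$, and you spell out why moving a single end suffices. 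One small slip in your discarded first plan: a translation along $\ell$ fixes both ends of $\ell$, so ``fixes no end of it'' should read ``fixes no vertex of it''.
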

\begin{proof}
 By Lemma \ref{doublecoset}, it suffices to show that $
\tilde{g}^{-1} \in \widetilde{K} \, \tilde{g} \, \widetilde{K}$, for all $\tilde{g} \in \widetilde{G}$.

Moreover, using the decomposition $\widetilde{G} = \widetilde{K} \, \widetilde{A} \, \widetilde{K}$ from Lemma \ref{lem::tilde_KAK}, it therefore suffices to prove that
\[
\tilde{\gamma}^{-1} \in \widetilde{K} \, \tilde{\gamma} \, \widetilde{K}
\quad \text{for every } \tilde{\gamma} \in \widetilde{A}.
\]

Therefore, take $\tilde{\gamma} \in \widetilde{A}$ and write $\tilde{\gamma} = (a^n, \xi)$ for some $n \in \Z$, and $\xi \in T$. Then $\tilde{\gamma}^{-1} = (a^{-n}, \xi')$ for some $\xi' \in T$ and hence 
 $\tilde{\gamma}^{-1}(x_0) = y_0$, where $y= a^{-n}(x)$ is a vertex of $X$, and where $x_0, y_0$ denote the lifts of the vertices $x, y$ to $\widetilde{X}$.

Since $K$ acts transitively on the visual boundary $\partial X$, there is some $k\in K$ such that 
\[
k a^{-n} (x) = a^n(x).
\]
Therefore $a^{-n} k a^{-n} (x) = x$ which means that $a^{-n} k a^{-n} \in K$.

Then, for any $\eta \in T$, taking $\tilde{k}: = (k,\eta)$, we get that $\tilde{k} \tilde{\gamma}^{-1}(x_0) = z_0$ in $\widetilde{X}$, where $z = a^{n}(x)$ in $X$. Hence 
 $ \tilde{\gamma}^{-1}\tilde{k} \tilde{\gamma}^{-1}(x_0)=x_0$, and so $\tilde{\gamma}^{-1}\tilde{k} \tilde{\gamma}^{-1} \in \widetilde{K}$, implying that $\tilde{\gamma}^{-1} \in \widetilde{K}\tilde{\gamma} \widetilde{K}$.
 %must be some $\xi'''\in T$such that $(a^n, \xi''')(k,\eta)\tilde{g}^{-1} \in \tilde{K}$. 
\end{proof}

\section{Contraction and parabolic subgroups for metaplectic covers} \label{relative}

Let us first introduce the needed definitions.

\begin{definition}
Let $G$ be a totally disconnected locally compact group and let $\beta = \{g_n\}_{n>0}$ be a sequence
of elements in $G$. We define the \textbf{positive parabolic subgroup} associated with $\beta$ by
\begin{align*}
P_\beta^+ 
  &:= \left\{ g \in G \,\middle|\, \{g_n^{-1} g g_n\}_{n \in \mathbb{N}} 
     \text{ is bounded in the topology of } G \right\}.
\end{align*}
Note that $P_\beta^+$ is a subgroup of $G$. We also define the \textbf{contraction subgroup}
\begin{align*}
U_\beta^+ 
  &:= \left\{ g \in G \,\middle|\, \lim_{n \to \infty} g_n^{-1} g g_n = e \right\}.
\end{align*}
Then $U_\beta^+$ is a subgroup of $G$, called the contraction group corresponding to $\beta$.
Since it need not be closed in general, we denote its closure by
\[
N_\beta^+ := \overline{U_\beta^+}.
\]

In the same way, but using $g_n g g_n^{-1}$, we define $P_\beta^-$, $U_\beta^-$, and $N_\beta^-$,
called the \textbf{negative parabolic} and \textbf{contraction subgroups} associated with $\beta$, respectively.
\end{definition}

As shown in \cite[Section 3]{BaWillis2004}, $P_\beta^+, P_\beta^-$ are closed subgroup of $G$. Moreover, from the algebraic point of view, we have the following Levi decomposition:

\begin{theorem}(see~\cite[Proposition 3.4, Corollary 3.17]{BaWillis2004}).
 Let $G$ be a totally disconnected, locally compact group which is also metrizable, and let $a \in G$. Take $\beta: = \{a^n\}_{n>0}$. Then
\[
P_\beta^+ = U_\beta^+ \, M_\beta,
\]
where
\[
M_\beta := P_\beta^+ \cap P_\beta^-.
\]
Moreover, $U_\beta^+$ is normal in $P_\beta^+$.   
\end{theorem}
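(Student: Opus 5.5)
The plan is to prove the Levi decomposition $P_\beta^+ = U_\beta^+ M_\beta$ for $\beta=\{a^n\}$ together with normality of $U_\beta^+$ in $P_\beta^+$. I would argue in three stages: first the algebraic facts, then the easy inclusion, and finally the hard inclusion.

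\emph{Algebraic facts.} Normality is direct. If $p\in P_\beta^+$ and $u\in U_\beta^+$, then
\[
a^{-n}pup^{-1}a^{n} = (a^{-n}pa^n)(a^{-n}ua^n)(a^{-n}pa^n)^{-1}.
\]
The outer factors stay in a fixed compact set $C$, and the middle factor tends to $e$. By uniform continuity of conjugation on compacta, for every identity neighbourhood $V$ there is a neighbourhood $W$ with $cWc^{-1}\subset V$ for all $c\in C$. This gives $pup^{-1}\in U_\beta^+$.

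\emph{Easy inclusion.} We have $U_\beta^+\subset P_\beta^+$, since convergent sequences are bounded. Also $M_\beta\subset P_\beta^+$ by definition, so $U_\beta^+M_\beta\subset P_\beta^+$.

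\emph{Hard inclusion $P_\beta^+\subset U_\beta^+M_\beta$.} I would use a tidy subgroup in the sense of Willis. Fix a compact open subgroup $V$ that is tidy for $a$, and set $V_\pm=\bigcap_{n\ge0}a^{\pm n}Va^{\mp n}$, so that $V=V_+V_-$. Let $V_{++}=\bigcup_n a^nV_+a^{-n}$ and $V_{--}=\bigcup_n a^{-n}V_-a^n$. Take $p\in P_\beta^+$. The set $\{a^{-n}pa^n\}$ is relatively compact, so it is covered by finitely many cosets of $V$. By a pigeonhole and diagonal argument, using metrizability to extract subsequences, I would then write $p=u m$ with $u$ contracted and $m$ having two-sided bounded orbit.

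Concretely, I would first show $P_\beta^+\cap V_{++}$-type statements: every element of $V_{--}$ whose forward conjugates stay bounded lies in $M_\beta$ up to the Baumgartner–Willis subgroup $L_a=\{x: \{a^nxa^{-n}\}_{n\in\mathbb{Z}} \text{ relatively compact}\}$. For the remaining part I would invoke the key result of \cite{BaWillis2004}: for metrizable $G$ and tidy $V$, one has $V_+\subset U_\beta^+\,(V\cap L_a)$, which is proved via a convergence/Cauchy argument using a countable base at $e$.

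Given that, I would write $a^{-N}pa^N=v_+v_-$ with $v_\pm\in V_\pm$ for large $N$, after replacing $V$ by an enlarged compact set using finitely many coset representatives that lie in $M_\beta$. Decomposing $v_+$ and conjugating back by $a^N$ then yields the factorization.

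The main obstacle is exactly this approximation $V_+\subset U_\beta^+(V\cap M_\beta)$, i.e.\ showing that the bounded-but-not-contracted part is absorbed by $M_\beta$. I would attempt it with a sequential compactness argument: choose limit points of $a^{-n}va^n$ along subsequences, and use the tidiness identity $a^{-1}V_+a\subset V_+$ to build $m$ as a limit of a Cauchy sequence of corrections. This construction is precisely the content of Corollary 3.17 in \cite{BaWillis2004}, which I would cite for that step.
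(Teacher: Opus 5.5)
Your proofs that $U_\beta^+$ is normal in $P_\beta^+$ and that $U_\beta^+M_\beta\subseteq P_\beta^+$ are correct and self-contained. The paper proves nothing here: it quotes the whole statement from \cite[Proposition 3.4, Corollary 3.17]{BaWillis2004}. For the reverse inclusion you also end up quoting Corollary 3.17, one of the two results the paper cites for this very statement, so on that point you are doing what the paper does. What your tidy-subgroup route actually needs from \cite{BaWillis2004} is a narrower lemma: for $V$ tidy, every element of $V_+=\bigcap_{n\ge0}a^nVa^{-n}$ lies in $U_\beta^+V_0$, where $V_0=\bigcap_{n\in\mathbb{Z}}a^nVa^{-n}\subseteq M_\beta$. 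Your sentence about $L_a$ says nothing, since $L_a$ as you define it is just $M_\beta$.

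The part you carry out yourself, reducing an arbitrary $p\in P_\beta^+$ to that lemma, fails as written. There is no reason for $a^{-N}pa^N$ to lie in $V$ for any $N$. If $p$ is central and generates an infinite discrete subgroup, all its conjugates equal $p$ and never enter a compact subgroup, even though $p\in M_\beta$. Covering the orbit by finitely many cosets of $V$ gives representatives in $P_\beta^+$ (e.g.\ orbit points), not in $M_\beta$. And once $V$ is replaced by a compact set that is not a subgroup, the factorization $V=V_+V_-$ is lost.

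The missing idea is the $\omega$-limit set $\Omega=\bigcap_{N\ge0}\overline{\{a^{-n}pa^n:n\ge N\}}$. It is nonempty, compact and invariant under $g\mapsto a^{-1}ga$, so every point of $\Omega$ has relatively compact two-sided orbit; that is, $\Omega\subseteq M_\beta$. The reduction then goes as follows.
\begin{enumerate}
\item Pick $y\in\Omega$ and $N$ with $a^{-N}pa^N\in yV$, and set $m_0:=a^Nya^{-N}\in M_\beta$.
\item Then $v:=a^{-N}m_0^{-1}p\,a^N=y^{-1}a^{-N}pa^N$ lies in $V\cap P_\beta^+$.
\item Write $v=v_+v_-$. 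Since $v,v_+\in P_\beta^+$ and $V_-\subseteq P_\beta^-$, we get $v_-\in V_-\cap P_\beta^+\subseteq M_\beta$.
\item The lemma gives $v_+\in U_\beta^+M_\beta$, hence $v\in U_\beta^+M_\beta$.
\item Conjugating back by $a^N$, which preserves both $U_\beta^+$ and $M_\beta$, yields $m_0^{-1}p\in U_\beta^+M_\beta$.
\item Since $m_0\in M_\beta\subseteq P_\beta^+$ normalizes $U_\beta^+$ by your first step, $p\in m_0U_\beta^+M_\beta=U_\beta^+M_\beta$.
\end{enumerate}
No diagonal argument or metrizability is needed for this reduction.
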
 

\begin{remark}
    For a reductive algebraic group over a local field $F$, the contraction subgroup of an inner automorphism defined by a semisimple element coincides with the group of $F$-points of the unipotent radical of the associated parabolic subgroup (see for example \cite[Corollary 4.29]{Cocio1}).
\end{remark}

For example, the next easy lemma describes the parabolic subgroups for covers $\widetilde{G} \leq \Aut(\widetilde{X})$.

\begin{lemma}
\label{lem::parabolic_metaplectic}
Under \textbf{Assumptions \eqref{Assumption1}} and \textbf{\eqref{Assumption2}}, and keeping the notation introduced there, suppose that $G$ admits the polar decomposition $G = K A K$. Let $\xi_+$ and $\xi_-$ be the endpoints of the bi-infinite geodesic line $\ell \subset X$, which, respectively, are the attracting and repelling fixed points of $a \in A$. Let $\beta := \{a^n\}_{n>0} \subset A$ and $\widetilde{\beta} := \{(a^n, t_n)\}_{n>0} \subset \widetilde{A}$. Then
\[
P_{\widetilde{\beta}}^+ = P_{\beta}^+ \times_\alpha T   = \widetilde{G}_{\xi_+} = G_{\xi_+} \times_\alpha T  \subset \widetilde{G},
 \quad
P_{\widetilde{\beta}}^- = P_{\beta}^- \times_\alpha T   = \widetilde{G}_{\xi_-} = G_{\xi_-} \times_\alpha T  \subset \widetilde{G}.
\]
Moreover, $U_\beta^\pm \times_\alpha T$ is a subgroup of $P_{\widetilde{\beta}}^\pm$. 
\end{lemma}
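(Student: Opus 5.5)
The strategy is to reduce every statement about $\widetilde{G}$ to the corresponding statement about $G$, using two facts. The central kernel $T$ is finite. The topology on $\widetilde{G}$ is the one from Remark \ref{rem::compact_open_cover}: a set in $\widetilde{G}$ is relatively compact (bounded) if and only if its image in $G$ is, since $\widetilde{U}\simeq U\times T$ locally and $T$ is finite.

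\textbf{Conjugation computation.} First I compute conjugation in $\widetilde{G}$. For $\tilde g=(g,t)$ and $\tilde g_n=(a^n,t_n)$, the group law \eqref{equ:group_structure} gives
\[
\tilde g_n^{-1}\tilde g\,\tilde g_n=(a^{-n}ga^n,\ c_n\, t)
\]
for some $c_n\in T$. Indeed, $T$ is central, so the $t_n$ cancel, and only a product of values of $\alpha$ survives, which lies in $T$.

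\textbf{The parabolic subgroups.} Because $T$ is finite, $\{\tilde g_n^{-1}\tilde g\tilde g_n\}$ is bounded in $\widetilde{G}$ if and only if $\{a^{-n}ga^n\}$ is bounded in $G$. This gives $P^+_{\widetilde\beta}=P^+_\beta\times_\alpha T$, and the same argument gives the $-$ case. The conclusion is independent of the choice of the $t_n$.

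\textbf{Identification with end stabilizers.} Next I identify $P^+_\beta$ with $G_{\xi_+}$, the standard fact for a hyperbolic element of a closed subgroup of $\Aut(X)$. If $g$ fixes $\xi_+$, then $g$ and $\ell$ share a subray toward $\xi_+$. Hence $a^{-n}ga^n$ moves $x$ by a uniformly bounded amount: the Busemann character of $g$ at $\xi_+$ is fixed, and eventually $g$ agrees with a translation along the ray. So $\{a^{-n}ga^n\}$ lies in a finite union of cosets of $K$, which is compact.

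Conversely, suppose $d(x,a^{-n}ga^n x)\le C$ for all $n$. Then $d(a^nx,ga^nx)\le C$. Since $a^nx\to\xi_+$, it follows that $ga^nx\to\xi_+$ as well, so $g\xi_+=\xi_+$. Symmetrically, $P^-_\beta=G_{\xi_-}$.

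\textbf{Stabilizers in $\widetilde{G}$.} By Remark \ref{rem::visual_boundary}, $\partial\widetilde X=\partial X$. The action of $\widetilde G$ on the vertices $v_0$ is through $g$ alone (part i) of \eqref{action}). Hence $\tilde g$ fixes $\xi_\pm$ if and only if $g$ does, which gives $\widetilde G_{\xi_\pm}=G_{\xi_\pm}\times_\alpha T$.

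\textbf{The contraction subgroup statement.} This is the main obstacle. One cannot claim equality $U^\pm_{\widetilde\beta}=U^\pm_\beta\times_\alpha T$, because the $T$-component $c_nt$ need not tend to $1$. For the ``moreover'' claim I only need that $U^\pm_\beta\times_\alpha T$ is a subgroup contained in $P^\pm_{\widetilde\beta}$.

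It is a subgroup because it is the preimage of the subgroup $U^\pm_\beta$ under the projection $\widetilde G\to G$. It lies in $P^\pm_{\widetilde\beta}$ because a convergent sequence is bounded, so $U^\pm_\beta\subset P^\pm_\beta$, and taking preimages preserves the inclusion.
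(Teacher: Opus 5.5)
Your proposal is correct and follows the paper's proof: you reduce boundedness in $\widetilde{G}$ to boundedness in $G$ using that $T$ is finite, identify $P^\pm_\beta$ with $G_{\xi_\pm}$, and transfer end stabilizers using $\partial\widetilde{X}=\partial X$ together with the fact that $\widetilde{G}$ acts on the vertices $v_0$ through $G$. The differences are that you prove $P^\pm_\beta=G_{\xi_\pm}$ by a direct tree argument where the paper cites Proposition 4.15 of \cite{Cocio1}, and that you spell out the contraction-subgroup inclusion the paper calls easy; your converse direction tacitly uses that $K$ is open, so a relatively compact set moves $x$ only a bounded distance.
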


\begin{proof}
Recall that the locally compact topology on $G$ induces a locally compact topology on $\widetilde{G}$ (see Remark~\ref{rem::compact_open_cover}), and that $\partial \widetilde{X} = \partial X$ (see Remark~\ref{rem::visual_boundary}).

Take $\tilde{g} = (g,t) \in P_{\widetilde{\beta}}^+$. Since $T$ is finite (hence discrete, and therefore bounded), it suffices to consider the subset $\{a^{-n} g a^n\}_{n >0}$ of $G$. By \cite[Proposition 4.15]{Cocio1}, we obtain that $g \in P_{\beta}^+ = G_{\xi_+}$. Since $\partial \widetilde{X} = \partial X$, it follows that $\tilde{g} \in \widetilde{G}_{\xi_+}$. Hence,
\[
P_{\widetilde{\beta}}^+ \subseteq \widetilde{G}_{\xi_+}.
\] 

Now take $\tilde{g} = (g,t) \in \widetilde{G}_{\xi_+}$. Using again the fact that $\partial \widetilde{X} = \partial X$, it follows that $g \in P_{\beta}^+ = G_{\xi_+}$. Thus, the sequence $\{a^{-n} g a^n\}_{n>0}$ is bounded in $G$, and consequently the sequence $\{(a^n,t_n)^{-1} \tilde{g} (a^n,t_n)\}_{n>0}$ is bounded in $\widetilde{G}$. This shows that
\[
\widetilde{G}_{\xi_+} \subseteq P_{\widetilde{\beta}}^+.
\]
 From the above arguments (i.e. \cite[Proposition 4.15]{Cocio1}, $\partial \widetilde{X} = \partial X$, and $T$ is discrete and bounded), it is easy to conclude that $P_{\widetilde{\beta}}^\pm = P_{\beta}^\pm\times_\alpha T$, $\widetilde{G}_{\xi_\pm} = G_{\xi_\pm} \times_\alpha T$, and that  $U_\beta^\pm \times_\alpha T$ is a subgroup of $P_{\widetilde{\beta}}^\pm$.
\end{proof}

\begin{remark}
    By computing the conjugation
\[
(a^n,t_n)^{-1}\,\tilde g\,(a^n,t_n)
\]
using the defining cocycle $\alpha$, one sees that, in general, the contraction subgroups $U_{\widetilde{\beta}}^{\pm}$ might not contain $U_\beta^{\pm} \times \{e\}$. In particular, the contraction subgroups $U_\beta^\pm$ do not necessarily split over general covers $\widetilde{G}$.

By contrast, for the standard metaplectic covers of $\mathrm{GL}_n(F)$, it is well known that unipotent subgroups split. In particular, the unipotent
radical of a parabolic subgroup - which coincides with the contraction subgroup associated to a suitable semisimple element - admits a canonical splitting over the metaplectic cover (see \cite[Proposition 4.1]{mcnamara}).
\end{remark}

\bibliographystyle{amsplain} 
\bibliography{references.bib}

@incollection {mcnamara,
    AUTHOR = {McNamara, P. J.},
     TITLE = {Principal series representations of metaplectic groups over
              local fields},
 BOOKTITLE = {Multiple {D}irichlet series, {L}-functions and automorphic
              forms},
    SERIES = {Progr. Math.},
    VOLUME = {300},
     PAGES = {299--327},
 PUBLISHER = {Birkh\"auser/Springer, New York},
      YEAR = {2012},
       DOI = {10.1007/978-0-8176-8334-4\_13},
       URL = {https://doi.org/10.1007/978-0-8176-8334-4_13},
}

@article {Kazhdan-Patterson,
    AUTHOR = {Kazhdan, D. A. and Patterson, S. J.},
     TITLE = {Towards a generalized {S}himura correspondence},
   JOURNAL = {Adv. in Math.},
  FJOURNAL = {Advances in Mathematics},
    VOLUME = {60},
      YEAR = {1986},
    NUMBER = {2},
     PAGES = {161--234},
      ISSN = {0001-8708},
   MRCLASS = {22E55 (11F70 11R39 22E50)},
  MRNUMBER = {840303},
MRREVIEWER = {Jean-Luc\ Brylinski},
       DOI = {10.1016/S0001-8708(86)80010-X},
       URL = {https://doi.org/10.1016/S0001-8708(86)80010-X},
}

@article {Cocio1,
    AUTHOR = {Ciobotaru, C.},
     TITLE = {A unified proof of the {H}owe--{M}oore property},
  FJOURNAL = {Journal of Lie Theory},
    VOLUME = {25},
      YEAR = {2015},
    NUMBER = {1},
     PAGES = {65--89}
}

@book {gelbart,
    AUTHOR = {Gelbart, S. S.},
     TITLE = {Weil's representation and the spectrum of the metaplectic
              group},
    SERIES = {Lecture Notes in Mathematics},
    VOLUME = {Vol. 530},
 PUBLISHER = {Springer-Verlag, Berlin-New York},
      YEAR = {1976},
     PAGES = {140}
}

@article{PATNAIK2017875,
title = {On {I}wahori--{W}hittaker functions for metaplectic groups},
journal = {Advances in Mathematics},
volume = {313},
pages = {875-914},
year = {2017},
issn = {0001-8708},
doi = {https://doi.org/10.1016/j.aim.2017.04.005},
url = {https://www.sciencedirect.com/science/article/pii/S0001870815301699},
author = {Patnaik, M. and Puskás, A.},
}

@article{GanGao2014,
  author       = {Gan, Wee Teck and Gao, Fan},
  title        = {The {L}anglands--{W}eissman Program for {B}rylinski--{D}eligne Extensions},
  journal      = {arXiv preprint},
  eprint       = {1409.4039},
  archivePrefix= {arXiv},
  primaryClass = {math.NT},
  year         = {2014},
  doi          = {10.48550/arXiv.1409.4039},
}

@article{CaCi15, 
title={Gelfand pairs and strong transitivity for {E}uclidean buildings}, 
volume={35}, 
DOI={10.1017/etds.2013.102}, 
number={4}, 
journal={Ergodic Theory and Dynamical Systems}, 
author={Caprace, P.-E. and Ciobotaru, C.}, 
year={2015}, 
pages={1056–1078}
}

@article{Ci16,
    author = {Ciobotaru, C.},
    title = {The universal group of {B}urger--{M}ozes and the {H}owe--{M}oore property},
    journal = {arXiv:1612.09427},
    year = {2016}
}

@article{BaWillis2004,
  author  = {Baumgartner, U. and Willis, G. A.},
  title   = {Contraction groups and scales of automorphisms of totally disconnected locally compact groups},
  journal = {Israel Journal of Mathematics},
  volume  = {142},
  pages   = {221--248},
  year    = {2004},
  doi     = {10.1007/BF02771534}
}

@article{ash-doud,
    author = "Ash, A. and Doud, D.",
    title = "Even {G}alois representations and the cohomology of {GL}(2,$\mathbb{Z}$)",
    journal = "Annales Mathématiques Québec",
    volume = "43",
    year = "2019",
    pages = "1-35"
}

\end{document}